\documentclass[11pt]{amsart}
\usepackage{amssymb}
\usepackage{amsfonts}
\usepackage{amsmath}
\usepackage{graphicx}
\usepackage{xcolor}
\setcounter{MaxMatrixCols}{30}
\setcounter{page}{1}
\usepackage{amsmath}
\usepackage{mathrsfs}
\usepackage{stmaryrd}
\usepackage{epsfig,color}
\usepackage{blindtext}
\usepackage{enumerate}
\usepackage[citecolor=blue,colorlinks]{hyperref}

\usepackage{url}
\usepackage{bbm}
\usepackage{nicefrac,mathtools}
\usepackage{bm}   
\DeclareGraphicsExtensions{.pdf,.jpeg,.png}
\usepackage{epstopdf}
\usepackage{cancel} 
\usepackage[normalem]{ulem} 
\usepackage{verbatim} 
\usepackage{enumitem} 
\pagestyle{plain}

\usepackage{color}
\usepackage[msc-links, lite]{amsrefs}
\usepackage{geometry}
\geometry{left=2.80cm,right=2.8cm,top=3.5cm,bottom=3.2cm}

\setcounter{section}{0}

\newtheorem{theorem}{Theorem}[section]
\newtheorem{conjecture}[theorem]{Conjecture}
\newtheorem{proposition}[theorem]{Proposition}
\newtheorem{lemma}[theorem]{Lemma}
\newtheorem{corollary}[theorem]{Corollary}

\theoremstyle{definition}
\newtheorem{definition}[theorem]{Definition}
\newtheorem{remark}[theorem]{Remark}

\numberwithin{equation}{section}

\newcommand{\mb}{\mathbb}
\newcommand{\mc}{\mathcal}

\newcommand{\Sc}{\mathrm{Sc}}
\newcommand{\mr}{\mathrm}

\DeclareMathOperator{\Ric}{Ric}

\newcommand{\hess}{{\mathrm{Hess}}}
\newcommand{\vol}{\mathrm{vol}}

\def\Xint#1{\mathchoice
{\XXint\displaystyle\textstyle{#1}}%
{\XXint\textstyle\scriptstyle{#1}}%
{\XXint\scriptstyle\scriptscriptstyle{#1}}%
{\XXint\scriptscriptstyle\scriptscriptstyle{#1}}%
\!\int}
\def\XXint#1#2#3{{\setbox0=\hbox{$#1{#2#3}{\int}$ }
\vcenter{\hbox{$#2#3$ }}\kern-.6\wd0}}

\def\dashint{\Xint-}

\newcommand{\N}{\mathbb{N}}

\newcommand{\R}{\mathbb{R}}
\renewcommand{\subset}{\subseteq}
\newcommand{\defeq}{\mathrel{\mathop:}=}

\newcommand{\haus}{\mathcal{H}}
\newcommand{\leb}{\mathcal{L}}

\newcommand{\dist}{\mathsf{d}}

\newcommand{\meas}{\mathfrak{m}}


\DeclareMathOperator{\RCD}{RCD}


\usepackage{todonotes}

\title{Positive Scalar Curvature Meets Ricci Limit Spaces}
\date{\today}

\author{Jinmin Wang}\address{Department of Mathematics, Texas A\&M University}\email{jinmin@tamu.edu}

\author{ZhiZhang Xie}\address{Department of Mathematics, Texas A\&M University}\email{xie@tamu.edu}

\author{Bo Zhu}\address{Department of Mathematics, Texas A\&M University}\email{bozhu@tamu.edu}

\author{Xingyu Zhu}\address{Institute for Applied Mathematics, University of Bonn }\email{zhu@iam.uni-bonn.de}

\begin{document}

\begin{abstract}
We investigate the influence of uniformly positive scalar curvature on the size of a non-collapsed Ricci limit space coming from a sequence of $n$-manifolds with non-negative Ricci curvature and uniformly positive scalar curvature. We prove that such a limit space splits at most $n-2$ lines or $\R$-factors. When this maximal splitting occurs, we obtain a uniform upper bound on the diameter of the non-splitting factor. Moreover, we obtain a volume gap estimate and a volume growth order estimate of geodesic balls on such manifolds.

\bigskip

\noindent\textit{2020 Mathematics Subject classification}. Primary 53C21, 53C23. 

\noindent Keywords: Positive scalar curvature, Ricci limit spaces, Riemannian curvature dimension condition.
\end{abstract}

\maketitle

\section{Introduction}
In this article, we focus on the topological and geometric implications of uniformly positive scalar curvature following the philosophy of Gromov's macroscopic dimension conjecture. The conjecture essentially posits that any $n$-dimensional Riemannian manifold with uniformly positive scalar curvature is at most $(n-2)$-dimensional at large scales. Our particular interest lies in establishing an analogous statement or conjecture in the context of Ricci limit spaces.

Suppose that $(X,\dist, x)$ is a non-collapsed Ricci limit space that is obtained from a pointed Gromov--Hausdorff (pGH in short) limit of a sequence of Riemannian manifolds with non-negative Ricci curvature and uniformly positive scalar curvature, the large scale geometry can be understood as the number of $\R$ factor that split off from $X$. Motivated by the Gromov's macroscopic dimension conjecture, in our context, we obtain that for $X$ as described above, the number of $\R$-factors can be split off is at most $n-2$, and if the maximal splitting happens, i.e., if $X=\R^{n-2}\times Y$ isometrically, then certain topological and geometric constraints is imposed upon on $Y$, which heuristically inherit from the uniformly positive scalar curvature. The challenge is that a proper notion of scalar curvature is not yet defined on Ricci limit spaces, which remains an open question in the community of studies on the scalar curvature. Here, with a careful investigation on the metric structure of manifolds rather than introducing a notion of scalar curvature on limit spaces, we prove that,

\begin{theorem}\label{thm:limit_n-2}
If $(X,\dist, p)$ is a (pointed) non-collapsed Ricci limit space that is a {\rm pGH} limit of a sequence of $n$-dimensional Riemannian manifolds $(M_i, g_i, p_i)$ with $\Ric_{g_i}\ge 0$, $\mr{Sc}_{g_i}\ge 2$ and $\vol_{g_i}(B(p_i,1))>v>0$ for every $i\in \N$, then
\begin{enumerate}
    \item\label{main:item1} $X$ cannot isometrically split off $\R^{n-1}$.
    \item\label{main:item2} If $X$ splits off $\R^{n-2}$ and we write  $X=\R^{n-2}\times Y$, then $Y$ is isometric to either $(\mb{S}^2,\dist_{\mb{S}^2})$ or $(\R P^2,\dist_{\R P^2})$ with a metric of non-negative curvature in the sense of Alexandrov geometry. 
    \item\label{main:item3} If $X$ is isometric to $\R^{n-2}\times \mb S^2$, and either of the following two conditions is satisfied
    \begin{enumerate}
        \item The $\mb S^2$ has no singular points, i.e., $\mc{R}(\mb S^2)=\mb S^2$;
        \item $3\le n\le 8$,       
    \end{enumerate}
    then
    \begin{equation}\label{eq:diambd}
        \mr{diam}(\mb{S}^2, \dist_{\mb{S}^2})\le \sqrt{\frac{2 (n-1)}{n}}\pi.
    \end{equation}
    Here, $\mc{R}(S^2)$ denotes the regular set of $\mb S^2$ (see section \ref{sec:prelim} for the details).
\end{enumerate}
\end{theorem}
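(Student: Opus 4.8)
The plan is to keep scalar curvature on the smooth approximants $M_i$, where $\Sc_{g_i}\ge 2$ is available, and to transplant geometric configurations from $X$ down to the $M_i$, never attempting to make sense of scalar curvature on $X$ itself. The organizing tool is a \emph{flatness obstruction}: a non-collapsed limit $X$ of such a sequence cannot be flat on any nonempty open set. To see this, recall that on the regular set $\mc R(X)$ the limit metric is $C^{1,\alpha}$ and the convergence $g_i\to g_X$ is $C^{1,\alpha}_{\mathrm{loc}}$ there (Cheeger--Colding); if $g_X$ were flat near a regular point $q$, then in $g_i$-harmonic coordinates near $q$ one has $g_i\to\delta$ in $C^{1,\alpha}$, hence $\partial g_i\to 0$ uniformly on a fixed coordinate ball $B$. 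Using $\Sc(g_i)=-\tfrac12 g_i^{ab}g_i^{cd}\partial_c\partial_d(g_i)_{ab}+Q(g_i,\partial g_i)$ with $Q$ quadratic in $\partial g_i$, testing against a fixed $0\le\phi\in C_c^\infty(B)$ and integrating by parts \emph{once}, all resulting terms tend to $0$, contradicting $\int_B\phi\,\Sc(g_i)\ge 2\int_B\phi>0$. Part \eqref{main:item1} is then immediate: if $X=\R^{n-1}\times Z$ then $\dim Z=1$, so $Z$ is a non-collapsed $\RCD(0,1)$ space, i.e. an interval or a circle, and $X$ is flat near any interior point.

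For \eqref{main:item2}, write $X=\R^{n-2}\times Y$; the dimension count makes $Y$ a non-collapsed $\RCD(0,2)$ space, hence (Lytchak--Stadler) an Alexandrov surface of curvature $\ge 0$, and since the singular set of a non-collapsed Ricci limit has codimension $\ge 2$, $Y$ is a \emph{closed} surface with at most finitely many conical points and no boundary. I would then eliminate all possibilities except $\mb S^2$ and $\R P^2$. If $Y$ carries a flat metric (a flat torus or Klein bottle, a flat cylinder or Möbius band, $\R^2$, or a flat-with-cone-points sphere), then $X$ is flat near an interior regular point, contradicting the flatness obstruction. If $Y$ is non-compact, it contains a ray; rebasing along the ray produces a pointed limit that contains a line (the ray becomes a line), which by the splitting theorem splits off an $\R$, so (via a diagonal/volume-convergence argument) $X$ gives rise to a new non-collapsed Ricci limit of the same type that splits off $\R^{n-1}$, contradicting \eqref{main:item1}. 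By the Gauss--Bonnet classification of closed non-negatively curved Alexandrov surfaces, the only survivors are $\mb S^2$ and $\R P^2$ with a non-negatively curved metric.

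For \eqref{main:item3}, the point is the \emph{sharp} constant $\sqrt{2(n-1)/n}\,\pi$, which is exactly Gromov's sharp band-width bound for an $n$-dimensional band with $\Sc\ge 2$ over a cross-section not admitting positive scalar curvature; I would realize such a band inside $M_i$. Let $D=\mr{diam}(\mb S^2,\dist_{\mb S^2})$, attained by $y_\pm$, and choose $x_i^\pm\in M_i$ converging to $(0,y_\pm)$, so $d_{M_i}(x_i^-,x_i^+)=D+o(1)$. Deleting small balls about $y_\pm$ exhibits $X$ near these points as containing, up to error $o(1)$, a region isometric to $[-L,L]^{n-2}\times S^1\times[0,D-o(1)]$ for every fixed $L$ (the $S^1$ being a level set of $\dist_{\mb S^2}(y_-,\cdot)$); pulling this back gives a band $V_i\subset M_i$ with $\Sc_{g_i}\ge 2$, width $\to D$, cross-section $[-L,L]^{n-2}\times S^1$, and the $[-L,L]^{n-2}$-directions arbitrarily long relative to the width. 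A warped $\mu$-bubble in $V_i$ (with prescribing function blowing up at the two width-ends and the optimal one-variable warping, for which the model $dt^2+(\cos at)^{4/n}g_{T^{n-1}}$ with $a=\sqrt{n/2(n-1)}$ has $\Sc\equiv 2$ and width exactly $\sqrt{2(n-1)/n}\,\pi$), together with the standard dimension descent on the cross-section and the fact that the side faces $\{|x_j|=L\}$ are out of reach when $L$ is large, forces $\mr{width}(V_i)\le\sqrt{2(n-1)/n}\,\pi+o(1)$; letting $i\to\infty$ gives \eqref{eq:diambd}. The $(n-1)$-dimensional $\mu$-bubble $\Sigma_i$ is automatically regular when $n-1\le 7$, i.e. $3\le n\le 8$; when instead $\mc R(\mb S^2)=\mb S^2$ the sphere is a smooth surface, and a symmetrization in the $\R^{n-2}$-directions reduces the bubble to a one-dimensional object in a surface, which has no singularities, so the dimension restriction can be dropped.

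The main obstacle is \eqref{main:item3}: making the warped $\mu$-bubble run on the non-compact, only approximately product bands $V_i$ — keeping the minimizer in the bulk away from the side faces, checking that the cross-section genuinely carries the torus-type obstruction to positive scalar curvature, and, above all, bookkeeping the constants so that the limiting bound is the \emph{sharp} value $\sqrt{2(n-1)/n}\,\pi$ and not a weaker one, with all Gromov--Hausdorff errors shown to be uniform in $L$ and to vanish as $i\to\infty$. A secondary, purely technical point is pinning down the exact convergence input for the flatness obstruction ($C^{1,\alpha}_{\mathrm{loc}}$ convergence on $\mc R(X)$) and the precise two-dimensional $\RCD$ structure statement used in \eqref{main:item2}.
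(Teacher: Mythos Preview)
Your flatness obstruction for \eqref{main:item1} and \eqref{main:item2} has a genuine gap: with only a lower Ricci bound and non-collapsing, $C^{1,\alpha}_{\mathrm{loc}}$ convergence of $g_i$ to the limit metric on $\mc R(X)$ is \emph{not} available. That regularity is Anderson's harmonic-radius estimate and requires a \emph{two-sided} Ricci bound; what Cheeger--Colding give on $\mc R(X)$ is a bi-H\"older homeomorphism (metric Reifenberg), with no control of $\partial g_i$. Without $\partial g_i\to 0$ your integration-by-parts argument collapses: after one integration the surviving terms are quadratic in $\partial g_i$ and there is no reason they vanish. The paper handles this in two ways. The main proof never touches $\partial g_i$: the bi-H\"older topological stability (Theorem~\ref{lem:top}) pulls a long torical band $\mb T^{n-1}\times[0,11]$ from the flat limit back into $M_i$, and the band-width inequality applied \emph{on $M_i$} gives the contradiction. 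A remark after the proof realizes your flatness idea correctly: almost-splitting harmonic maps $u:B_2(p_i)\to\R^n$ together with Bochner yield $\int_{B_1}\Ric(\nabla u_a,\nabla u_b)\,d\vol\le c(n)\delta\,\vol(B_1)$, and a maximal-function plus transformation-theorem argument locates a single point where $\Ric$ is pointwise small in every direction, whence $\Sc<2$. That is the right replacement for your $C^{1,\alpha}$ step.

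For \eqref{main:item3}(a) your ``symmetrization in the $\R^{n-2}$-directions'' cannot work: the $\mu$-bubble lives in $M_i$, which carries no $\R^{n-2}$ symmetry whatsoever, so there is nothing to symmetrize, and minimal-hypersurface regularity genuinely fails for $n-1>7$. The paper's mechanism here is entirely different and does not use $\mu$-bubbles at all. The hypothesis $\mc R(\mb S^2)=\mb S^2$ makes all of $X=\R^{n-2}\times\mb S^2$ regular, so Theorem~\ref{lem:top} applies and one can pull back $\mb S^1\times I\times[-R/2,R/2]^{n-2}$ (the sphere minus two small caps, times a large cube) to a smooth domain in $M_i$; on that domain the sharp $\square^{n-m}$ inequality of Wang--Xie--Yu (Theorem~\ref{thm:cube(n-m)}, proved by Dirac-operator methods valid in all dimensions) gives $\tfrac{1}{(D-3\epsilon)^2}+\tfrac{n-2}{(2R-\epsilon)^2}>\tfrac{n}{2\pi^2(n-1)}$, and sending $R\to\infty$, $\epsilon\to 0$ yields \eqref{eq:diambd}. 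Your $\mu$-bubble outline for case~(b) is, modulo the side-face issue you flag yourself, essentially the paper's iterated free-boundary $\mu$-bubble argument.
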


\begin{remark}\label{rem:codim4}
    The condition $\mc{R}(\mb S^2)=\mb S^2$ is not the weakest possible condition to guarantee the validity of our theorem but it is a natural one. For example it is satisfied when there is a Ricci curvature upper bound by the codimension 4 theorem of Cheeger--Naber \cite{CN15_codim4}. Also note that the metrics satisfying $\mc R(\mb S^2)=\mb S^2$ are dense in all metrics that support a non-collapsed $\RCD(0,2)$ structure in GH topology, because every $\mb S^2$ of nonnegative curvature in the sense of Alexandrov geometry can be approximated by $\mb S^2$ of nonnegative sectional curvature, see for example \cite{RichardSmoothing}.
\end{remark}

The diameter upper bound established in \eqref{eq:diambd} is not sharp. We describe the intuition to the expected sharp bound. Given the assumptions in Theorem \ref{thm:limit_n-2} and the maximal splitting $X=\R^{n-2}\times Y$, if we anticipate that the scalar curvature lower bound is preserved under GH convergence, then the flat $\R^{n-2}$ factor in $X$ should not inherit any positivity of scalar curvature. Consequently, the positive scalar curvature would be carried fully onto the $\mathbb{S}^2$-factor. In such a scenario, the scalar curvature lower bound on $\mathbb{S}^2$ should be equivalent to the sectional curvature lower bound in the sense of Alexandrov geometry. This will lead to a Bonnet--Myer type diameter upper bound. An analogous characterization on the $2$-area has been proved in \cite{ZhuJT_area}. Here, we propose a conjecture characterizing the behavior of uniformly positive scalar curvature under {\rm pGH} convergence.

\begin{conjecture}\label{thm: Bonnet-Myer-type}
In the setting of Theorem \ref{thm:limit_n-2}, if $X=\R^{n-2}\times \mb S^2$, then 
\begin{equation}
    \mr{diam}(\mb{S}^2, \dist_{\mb{S}^2})\le \pi.
\end{equation}
Moreover, the equality holds if and only if $(\mb S^2,\dist_{\mb{S}^2})$ is isometric to the spherical suspension of $\mb S^1$ with $\mr{diam}(\mb S^1)\leq \pi$.
\end{conjecture}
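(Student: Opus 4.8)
The plan is to reduce the conjecture to the Bonnet--Myers and maximal diameter rigidity theorems for two-dimensional Alexandrov spaces of curvature $\ge 1$, and to supply that curvature bound by transferring the hypothesis $\Sc_{g_i}\ge 2$ onto the $\mb S^2$-factor of the limit through a dimensional reduction along the almost split flat directions.

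\emph{The two-dimensional endgame.} By Theorem~\ref{thm:limit_n-2}\eqref{main:item2}, $(\mb S^2,\dist_{\mb S^2})$ is a two-dimensional Alexandrov space of curvature $\ge 0$, hence a surface of bounded integral curvature whose curvature measure $\omega$ is nonnegative, with atoms at the non-manifold points and non-atomic part supported on $\mc R(\mb S^2)$. It therefore suffices to prove that $\omega$ dominates the area measure $\vol$ on $\mc R(\mb S^2)$ --- equivalently, that the Gauss curvature obeys $K\ge 1$ there --- since then $\omega\ge\vol$ on all of $\mb S^2$, which is exactly the statement that $(\mb S^2,\dist_{\mb S^2})$ has Alexandrov curvature $\ge 1$; the Bonnet--Myers theorem then yields $\mr{diam}(\mb S^2,\dist_{\mb S^2})\le\pi$. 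For the equality case, a minimizing geodesic of length $\pi=\mr{diam}$ and the maximal diameter theorem force $\mb S^2$ to be the spherical suspension of a one-dimensional Alexandrov space $Z$ of curvature $\ge 1$ with $\mr{diam}(Z)\le\pi$; since $\mb S^2$ is a closed surface, $Z$ can be neither an interval nor a point (these would create a boundary or lower the dimension), so $Z=\mb S^1$ with $\mr{diam}(\mb S^1)\le\pi$, while conversely every such suspension has diameter exactly $\pi$. This gives the stated dichotomy.

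\emph{Transferring the scalar curvature.} To obtain $K\ge 1$ on $\mc R(\mb S^2)$ I would work on the approximating manifolds. Fix $y\in\mc R(\mb S^2)$ and $q_i\in M_i$ converging to $(0,y)\in\R^{n-2}\times\mb S^2$; by Cheeger--Colding's almost splitting theorem, for $i$ large there are almost harmonic functions $b_i^1,\dots,b_i^{n-2}$ on a fixed ball about $q_i$ realizing the $\R^{n-2}$-splitting, with $\lb{\nabla b_i^k,\nabla b_i^\ell}\to\delta_{k\ell}$ and $\|\Hess b_i^k\|_{L^2}\to 0$. Performing $n-2$ successive slicings, one for each $b_i^k$ --- a stable minimal hypersurface (or $\mu$-bubble) across it, followed by the Schoen--Yau conformal change --- produces a two-dimensional slice $N_i^2$ with a conformally altered metric. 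The crux is that each slicing is carried out in a direction that is \emph{almost flat and almost totally geodesic} (the corresponding second fundamental forms and the quantities $|\Hess b_i^k|$ tend to $0$), so none of the scalar-curvature budget is absorbed by the normal directions and the dimensional constant does not deteriorate: the surface $N_i^2$ ends up with Gauss curvature $\ge 1-o_i(1)$, in the weak sense furnished by the stability inequalities. When $3\le n\le 8$ all intermediate slices have dimension $\le 7$ and are smooth --- exactly the setting of the hypotheses in Theorem~\ref{thm:limit_n-2}\eqref{main:item3} --- so one may let $i\to\infty$: the $N_i^2$ converge, locally over $\mc R(\mb S^2)$, to $\mb S^2$ with conformal factors tending to $1$, hence $\omega_{N_i^2}\rightharpoonup\omega_{\mb S^2}$ and $\vol_{N_i^2}\to\vol_{\mb S^2}$; since $\omega_{N_i^2}\ge(1-o_i(1))\vol_{N_i^2}$, the inequality $\omega_{\mb S^2}\ge\vol_{\mb S^2}$ is preserved in the limit over $\mc R(\mb S^2)$. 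In the case $\mc R(\mb S^2)=\mb S^2$ one argues similarly, or --- under a local Ricci upper bound as in Remark~\ref{rem:codim4} --- one bypasses the descent altogether, since there $M_i\to X$ is regular enough on the regular set that $\Sc_{g_i}\to\Sc_X=2K_{\mb S^2}\ge 2$ directly.

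\emph{The main obstacle.} The real difficulty lies in the transfer step: running the iterated descent on manifolds that are only \emph{almost} Riemannian products, keeping the conformal factors, the stability error terms and the slice geometries under uniform control, dealing with the minimal-surface singular set when $n>8$, and ruling out a concentration of scalar curvature onto the singular set of $\mb S^2$ in the limit. This is a quantitative incarnation of the still-open problem of stability of scalar-curvature lower bounds under non-collapsed Gromov--Hausdorff convergence; a synthetic theorem asserting that $X$ carries ``$\Sc\ge 2$'' in any reasonable sense would make the argument immediate, since the Riemannian product $\R^{n-2}\times\mb S^2$ has $\Sc_X=2K_{\mb S^2}\ge 2$. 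A single $\mu$-bubble across a near-diametral band $\R^{n-2}\times(\text{tube about a near-diametral geodesic of }\mb S^2)$ is tempting, but, just as for Theorem~\ref{thm:limit_n-2}\eqref{main:item3}, the available band-width inequalities only return the non-sharp constant $\sqrt{2(n-1)/n}\,\pi$; recovering the sharp value $\pi$ appears to require genuinely exploiting the \emph{flatness} --- not merely the dimension --- of the $\R^{n-2}$-directions, which is precisely the role of the iterated descent along the $b_i^k$.
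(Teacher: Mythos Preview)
The statement you are attempting to prove is labelled a \emph{Conjecture} in the paper, not a theorem; the paper offers no proof. It only establishes the weaker bound $\sqrt{2(n-1)/n}\,\pi$ in Theorem~\ref{thm:limit_n-2}\eqref{main:item3} and remarks that the sharp bound $\pi$ is confirmed for $n=3$ in separate work of the third and fourth authors. So there is no ``paper's own proof'' to compare against; your proposal is an attack on an open problem.

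Your two-dimensional endgame is fine and standard: once one knows $(\mb S^2,\dist_{\mb S^2})$ has Alexandrov curvature $\ge 1$, Bonnet--Myers and the maximal-diameter rigidity give exactly the conclusion. The entire content of the conjecture is therefore the transfer step, and here your proposal has a genuine gap that you yourself flag. The assertion that in the iterated descent ``none of the scalar-curvature budget is absorbed by the normal directions and the dimensional constant does not deteriorate'' is precisely what is not known. In the paper's own $\mu$-bubble descent (proof of item~\eqref{main:item3}, case (b)) each step costs a term $\tfrac{4(n-1)\pi^2}{nR^2}$ coming from the auxiliary function $h$; sending $R\to\infty$ in the $\R^{n-2}$-directions kills those terms, but the step across the $\mb S^2$-direction still carries the factor $\tfrac{4(n-1)}{n}$, which is exactly why the paper obtains $\sqrt{2(n-1)/n}\,\pi$ rather than $\pi$. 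To do better one would need, as you say, to exploit the \emph{almost-flatness} quantitatively: produce stable slices along the $b_i^k$ whose second fundamental forms and stability defects are $o_i(1)$, show these slices converge to $\{0\}\times\mb S^2$ with conformal factors tending to $1$, and pass the curvature inequality to the limit. None of these steps is currently available in the literature at the required precision; they amount to a form of stability of scalar-curvature lower bounds under non-collapsed GH convergence, which the paper explicitly identifies as open. Your outline is a reasonable roadmap, but it is not a proof.
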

Note that the third and fourth named authors \cite{Zhu-Zhu_optimal} confirm Conjecture \ref{thm: Bonnet-Myer-type} for $n=3$. Moreover, the virtue of the proof of Theorem \ref{thm:limit_n-2} also enables us to derive an upper bound on the first Betti number, and this upper bound can be viewed as a topological consequence of having uniformly positive scalar curvature in a Ricci limit space. 

\begin{corollary}\label{cor:betti}
In the setting of Theorem \ref{thm:limit_n-2}, the first Betti number of $X$, denoted by $b_1(X)$, satisfies
\begin{equation}\label{eq:bettiRLS}
    b_1(X)\le n-2.
\end{equation}    

\end{corollary}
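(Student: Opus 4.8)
The plan is to combine the maximal splitting result of Theorem \ref{thm:limit_n-2}\eqref{main:item2} with the structure theory of non-collapsed Ricci limit spaces, in the spirit of the Cheeger--Gromoll splitting theorem and its quantitative refinements for limit spaces. First I would recall that since $X$ is a non-collapsed Ricci limit space with $\Ric\ge 0$, its \emph{Euclidean factor} $\R^k$ splitting off $X=\R^k\times Z$ with $Z$ containing no lines is well-defined, and moreover the first Betti number is controlled by this splitting: the universal cover $\wti X$ (which exists and is again a Ricci limit space) splits as $\R^{k}\times \wti Z$, and the deck transformation group acts properly discontinuously. The key point from structure theory is that a non-collapsed $\Ric\ge 0$ limit space with \emph{no} lines has a finite fundamental group effect on $b_1$; concretely, $b_1(X)\le k$, where $k$ is the number of $\R$-factors that split off from the universal cover $\wti X$ (this is the analogue for limit spaces of the classical fact that a compact manifold with $\Ric\ge 0$ has $b_1\le n$ with equality forcing a flat torus; here the torus directions must come from flat splitting directions).

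Next I would invoke Theorem \ref{thm:limit_n-2}\eqref{main:item1}, which says $X$ cannot split off $\R^{n-1}$, to conclude $k\le n-2$: indeed, if $\wti X$ splits off $\R^{k}$ with $k\ge n-1$, then one argues that $X$ itself already splits off $\R^{n-1}$ --- because the abelian part of the deck group acts by translations on the Euclidean factor, passing to the quotient preserves an $\R^{k}$-splitting up to a compact (in fact, under $\Ric\ge 0$ non-collapsed, torus) factor, and this torus factor has dimension $k-(\text{number of lines in }X)$; if $k\ge n-1$ then $X$ still splits off at least $\R^{n-1}$ unless the torus factor is nontrivial, but a nontrivial flat torus factor would make $X$ have a line, contradiction only after a short case analysis. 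Combining $b_1(X)\le k$ with $k\le n-2$ yields \eqref{eq:bettiRLS}.

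The main obstacle I anticipate is making rigorous the inequality $b_1(X)\le k$ in the setting of Ricci limit spaces rather than smooth manifolds: one needs the existence and good behavior of the universal cover of $X$ (available by Wei's and by Mondino--Wei's work on the fundamental group of $\RCD$ spaces), the equivariant splitting theorem (Gigli's splitting theorem applied equivariantly, together with the fact that the isometry group of an $\RCD$ space is a Lie group by Guijarro--Santos-Rodríguez or Sosa), and the structure of the deck group as containing a finite-index subgroup that acts on the Euclidean factor essentially by translations. A cleaner alternative route, which I would actually pursue, is to bypass the universal cover entirely and instead use the proof technique of Theorem \ref{thm:limit_n-2} directly: the argument there produces, from each independent loop, an $\R$-factor or line in an associated limit, so that too many independent loops would force $X$ to split off more than $\R^{n-2}$, contradicting part \eqref{main:item1}; this is presumably what the phrase ``the virtue of the proof of Theorem \ref{thm:limit_n-2} also enables us'' refers to, and it localizes the entire difficulty to citing the already-established part \eqref{main:item1}.
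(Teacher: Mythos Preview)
Your overall plan has a genuine gap. The step ``if $\wti X$ splits off $\R^{k}$ with $k\ge n-1$, then $X$ itself already splits off $\R^{n-1}$'' is false in general: take $X$ compact and flat (which is not excluded a priori), so $\wti X=\R^n$ splits off $\R^n$ but $X$ splits off no line at all. Your attempted patch, that ``a nontrivial flat torus factor would make $X$ have a line,'' is simply wrong --- a flat torus is compact and contains no lines. So you cannot reduce to item~\eqref{main:item1} of Theorem~\ref{thm:limit_n-2} applied to $X$ itself.

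The deeper issue is that the scalar curvature lower bound lives on the approximating manifolds $M_i$, not on $X$ or $\wti X$, and Theorem~\ref{thm:limit_n-2}\eqref{main:item1} is a statement about limits of such $M_i$. The paper's proof resolves this by working separately in two cases. When $X$ is compact, the universal cover $\wti X$ is shown to be $\R^n$, and then one invokes the Pan--Wang equivariant convergence theorem to realize $\R^n$ (up to a group quotient) as the limit of the universal covers $\wti M_i$; the long torical band is then pulled back into $\wti M_i$, which still has $\Sc\ge 2$, yielding the contradiction. When $X$ is non-compact, one quotes Ye's Betti number bound and rigidity to identify $X$ explicitly as $\R\times\mb T^{n-1}$ (or its non-orientable quotient), which already contains a long torical band, and then the pullback goes directly into $M_i$. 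Your proposal does not supply either of these mechanisms for transporting the contradiction back to a manifold with the scalar curvature bound, and your ``alternative route'' is too vague to fill the gap: nothing in the proof of Theorem~\ref{thm:limit_n-2} produces lines from homology classes.
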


\begin{remark}
    Note that for non-compact manifolds $(M^n,g)$ with non-negative Ricci curvature, we have $b_1(M) \leq n-1$. See also a version of this result for $\RCD$ spaces in \cite{Ye_bettiRCD}. Inspired by Gromov's macroscopic dimension conjecture, we expect a sharp upper bound $b_1(M) \leq n-3$ for any non-compact manifold $(M,g)$ with non-negative Ricci curvature and uniformly positive scalar curvature. However, the upper bound $n-3$ remains open if $n\ge 4$. We also point out that the sharp upper bound $n-3$ can be derived from \cite{andersontopology} and the volume growth conjecture raised by Gromov in \cite{Zhu_Geometryofpsc}*{Problem 1.6}. A weaker version of this conjecture will be obtained in Theorem \ref{thm:splitting} below.
\end{remark}
Before advancing to the next result, we discuss some related work here. It is known that the scalar curvature lower bound along is not stable under Gromov-Hausdorff convergence even we assume that the limit space is smooth (see \cite{Lee_Topping_example}). To continue the study of stability of scalar curvature lower bound, one either consider other notions of convergence, for instance \cites{Gromov_DP, Sormani_IFC}, or impose additional conditions (or do both, see \cite{LNN21}). A natural additional condition is a stronger curvature lower bound.  
It is still an open question whether the scalar curvature lower bound is stable under the Gromov-Hausdorff convergence with additional curvature assumptions. It is proved in \cite{petrunin_curvature_tensor} (after \cite{Petrunin_ScalarIntegral}) that if a noncollapsing sequence of manifolds $(M_i, g_i)$ has a uniform sectional curvature lower bound, then one can pass $\Sc_{g_i} \cdot \vol_{g_i}$ to the GH limit as a Radon measure, which can be regarded as some generalized scalar curvature, but the geometric meaning of it is yet to be revealed. Moreover, if sectional curvature lower bound is weakened to a Ricci curvature one, Naber conjectures that the same measure convergence holds and describes some potential applications in \cite{naber_open_question}. Along this line of thoughts, Theorem \ref{thm:limit_n-2} provides some evidences for the stability of scalar curvature under additional condition of nonnegative Ricci curvature. Some other study of stability of scalar curvature lower bounds under extra conditions includes \cites{Robin_scalar, KKL22}. From a technical point of view, a similar use of cube inequality (Theorem \ref{thm:cubeineq}) we make here can also be found in \cites{Dongpsc, Zhu_Geometryofpsc}.  



\vspace{1cm}
 Next, we study the effect of positive scalar curvature on the volume of geodesic balls on complete Riemannian manifolds with non-negative Ricci curvature. Ricci limit spaces do not make its appearance in the statement but do play a role in the proof. To motivate our study, we recall that if $B(p,r)\subset M$ is a geodesic ball for small $r> 0$, then 
\[
    \vol_g(B(p,r))=\omega_n r^n\left(1-\frac{\Sc_g(p)}{6(n+2)}r^2+O(r^4)\right).
\]
Here, $\omega_n$ is the volume of the unit ball in $\R^n$. The expansion indicates that the more positive $\Sc_g(p)$ is, the smaller $\vol_g(B(p,r))$ is, but it is only valid for small radii. 
However, under the extra assumption of non-negative Ricci curvature, we can prove a volume gap theorem for a wider range of radii.

\begin{theorem} \label{thm:gap}
Suppose that $(M^n,g)$ is a complete, non-compact, $n$-dimensional Riemannian manifold with non-negative Ricci curvature $\Ric_{g} \geq 0$. If  $\Sc_{g} \geq 4\pi^2 n(n-1)$, then there exists a constant $c\defeq c(n)$ such that
\begin{equation}
    \sup_{p \in M}\mr{vol}_g(B(p,1)) \leq c < \omega_n.
\end{equation}
\end{theorem}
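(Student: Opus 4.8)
The plan is to argue by contradiction using a blow-down / compactness argument, converting the volume near-equality into an isometric splitting that then contradicts part~\eqref{main:item1} of Theorem~\ref{thm:limit_n-2}. Suppose the conclusion fails: then there is a sequence of complete, non-compact $n$-manifolds $(M_i, g_i)$ with $\Ric_{g_i}\ge 0$ and $\mr{Sc}_{g_i}\ge 4\pi^2 n(n-1)$, together with points $p_i\in M_i$, such that $\vol_{g_i}(B(p_i,1)) \to \omega_n$. First I would invoke the sharp Bishop--Gromov rigidity/almost-rigidity theorem (in the quantitative form of Colding, and Cheeger--Colding): since $\Ric_{g_i}\ge 0$ and the normalized volume $\vol_{g_i}(B(p_i,1))/\omega_n \to 1$, the pointed spaces $(M_i, g_i, p_i)$ converge in the pGH sense (after passing to a subsequence) to $(\R^n, d_{\mr{eucl}}, 0)$; in particular the convergence is non-collapsed and the limit is Euclidean space, which isometrically splits off $\R^{n-1}$ (indeed $\R^n$).

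Next I would rescale to make the scalar curvature hypothesis of Theorem~\ref{thm:limit_n-2} applicable. Set $\lambda_i \to \infty$ slowly and consider the rescaled metrics $\tilde g_i = \lambda_i^2 g_i$; then $\Ric_{\tilde g_i} \ge 0$ is preserved, $\mr{Sc}_{\tilde g_i} = \lambda_i^{-2}\,\mr{Sc}_{g_i} \ge 4\pi^2 n(n-1)\lambda_i^{-2}$, which is the wrong direction — so instead I would \emph{shrink}: take $\mu_i\to\infty$ and look at $\hat g_i = \mu_i^{-2} g_i$, under which $\mr{Sc}_{\hat g_i} = \mu_i^{2}\,\mr{Sc}_{g_i}\to\infty$, while $\Ric_{\hat g_i}\ge 0$ and, for the rescaled unit ball, $\vol_{\hat g_i}(B_{\hat g_i}(p_i,1)) = \mu_i^{-n}\vol_{g_i}(B_{g_i}(p_i,\mu_i))$. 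The subtlety is that I need a \emph{definite} volume lower bound $\vol_{\hat g_i}(B_{\hat g_i}(p_i,1)) > v > 0$ at the rescaled unit scale together with $\mr{Sc}_{\hat g_i}\ge 2$ eventually; the first follows because under $\Ric\ge 0$ and $\vol_{g_i}(B(p_i,1))\to\omega_n$, Bishop--Gromov monotonicity forces $\vol_{g_i}(B(p_i,r))\ge (1-o(1))\omega_n r^n$ on a range of radii $r\le \mu_i$ with $\mu_i\to\infty$ chosen slowly enough — this is where one must be careful to extract the correct rate. Having arranged this, Theorem~\ref{thm:limit_n-2} applies to the sequence $(M_i,\hat g_i, p_i)$: the non-collapsed limit $X$ has $\Ric\ge 0$ in the limit sense, arises from manifolds with $\mr{Sc}\ge 2$ and definite volume, so it \emph{cannot split off $\R^{n-1}$}. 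But from the first paragraph (applied at the correspondingly rescaled scale, where $\vol_{\hat g_i}(B(p_i,1))/\omega_n\to 1$ still) the limit $X$ is $\R^n$, which does split off $\R^{n-1}$ — a contradiction.

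The main obstacle I anticipate is the \emph{coupling of the two scalings}: I need a single radius scale $\mu_i\to\infty$ at which simultaneously (a) the normalized volume of the $\mu_i$-ball is still $\ge (1-o(1))\omega_n$ (so the blow-down limit is forced to be $\R^n$), and (b) after rescaling that ball to unit size the volume is still bounded below by a fixed $v>0$ and $\mr{Sc}\ge 2$. Item (b) is automatic once $\mu_i\to\infty$ since $\mr{Sc}$ scales by $\mu_i^2$ and the volume ratio is scale-invariant; the real tension is that (a) demands $\mu_i$ grow slowly (Bishop--Gromov only propagates the near-maximal volume ratio outward at a controlled, a priori unquantified rate, so one genuinely needs Cheeger--Colding almost-rigidity to know $\vol_{g_i}(B(p_i,\mu_i))/(\omega_n\mu_i^n)\to 1$ for \emph{some} sequence $\mu_i\to\infty$), whereas there is no lower constraint forcing $\mu_i$ large beyond $\mu_i\to\infty$. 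A clean way to dispatch this is: by Bishop--Gromov the function $r\mapsto \vol_{g_i}(B(p_i,r))/(\omega_n r^n)$ is non-increasing and at $r=1$ equals $1-\varepsilon_i$ with $\varepsilon_i\to 0$; a diagonal argument then produces $\mu_i\to\infty$ with $\vol_{g_i}(B(p_i,\mu_i))/(\omega_n\mu_i^n)\to 1$, and one runs the rescaling $\hat g_i=\mu_i^{-2}g_i$ on this sequence. Modulo this bookkeeping the argument is a direct reduction to Theorem~\ref{thm:limit_n-2}\eqref{main:item1}. The constant $c(n)$ itself is produced non-constructively by this contradiction scheme; making it explicit would require a quantitative (non-compactness-based) version of Theorem~\ref{thm:limit_n-2}, which I would not attempt here.
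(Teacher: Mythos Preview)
Your core contradiction scheme is correct, but the entire rescaling discussion is both unnecessary and, as written, flawed. The hypothesis of Theorem~\ref{thm:limit_n-2} is $\Sc_{g_i}\ge 2$, and you already have $\Sc_{g_i}\ge 4\pi^2 n(n-1)\gg 2$ for $n\ge 2$; no rescaling is required to invoke that theorem. Once you drop the rescaling, your argument is simply: if $\vol_{g_i}(B(p_i,1))\to\omega_n$, Colding's volume almost-rigidity forces the non-collapsed pGH limit to be $\R^n$, which splits off $\R^{n-1}$, contradicting Theorem~\ref{thm:limit_n-2}\eqref{main:item1}. That is a valid proof.

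Your ``clean way to dispatch'' the coupling obstacle is actually wrong, and it is worth understanding why. Bishop--Gromov says $r\mapsto \vol_{g_i}(B(p_i,r))/(\omega_n r^n)$ is non-increasing, so knowing it equals $1-\varepsilon_i$ at $r=1$ gives \emph{no} lower bound whatsoever at radii $r>1$; the ratio could drop sharply. Hence no diagonal argument can produce $\mu_i\to\infty$ with ratio $\to 1$. The information flows only inward (to $r<1$), not outward.

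For comparison, the paper does not route through Theorem~\ref{thm:limit_n-2} at all. It observes that the limit unit ball is a Euclidean ball, pulls back a cube of side slightly larger than $1/\sqrt n$ into $M_i$ via the Cheeger--Colding bi-H\"older homeomorphism (Theorem~\ref{lem:top}), and then applies the cube inequality (Theorem~\ref{thm:cubeineq}) directly; the constant $4\pi^2 n(n-1)$ in the hypothesis is exactly what makes the cube inequality bite at unit scale. Your (repaired) route via Theorem~\ref{thm:limit_n-2}\eqref{main:item1} ultimately unwinds to the torical band inequality instead, so the two proofs are parallel but use different metric inequalities; the paper's choice explains the specific scalar curvature threshold in the statement.
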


We also have a version of this theorem for $M$ with nonempty boundary, based on the boundary regularity theory of non-collapsed $\RCD(K,N)$ spaces. See section \ref{sec:RCD} for relevant definitions. 

\begin{corollary}\label{cor:gap_boundary}
Let $(M^n,g)$ be a complete, non-compact, $n$-dimensional manifold with convex boundary $\partial M$, non-negative Ricci curvature $\Ric_{g} \geq 0$ and uniformly positive scalar curvature $\Sc_{g} \geq 4\pi^2 n(n-1)$ in the interior $M\setminus \partial M$, then there exists a constant $c\defeq c(n)$ such that
\begin{equation}\label{eq:gap_bdry}
    \sup_{p \in \partial M}\mr{vol}_g(B(p,1)) \leq c < \frac12\omega_n.
\end{equation}
\end{corollary}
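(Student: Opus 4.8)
The plan is to deduce Corollary~\ref{cor:gap_boundary} from Theorem~\ref{thm:gap} by doubling $M$ across $\partial M$. Let $(DM,\hat g)$ be the metric double, i.e. two isometric copies $M_1,M_2$ of $M$ glued along their common boundary $\Sigma\cong\partial M$. Then $DM$ is non-compact since $M$ is; on $DM\setminus\Sigma$ the metric $\hat g$ is smooth with $\Ric_{\hat g}\ge 0$ and $\Sc_{\hat g}\ge 4\pi^2 n(n-1)$; and across $\Sigma$ it is only Lipschitz, the curvature distribution concentrated on $\Sigma$ being governed by the \emph{sum} of the two copies' second fundamental forms, $2\,\mathrm{II}_{\partial M}\ge 0$, with mean-curvature term $2H_{\partial M}\ge 0$, because $\partial M$ is convex. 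Consequently $(DM,\hat g)$, with its Riemannian volume measure, is a non-collapsed $\RCD(0,n)$ space (non-collapsing inherited from $\vol_g(B(p,1))>v$), $\Sigma$ being the non-smooth locus. A further elementary point: the retraction $r\colon DM\to M_1$ that is the identity on $M_1$ and the canonical reflection on $M_2$ does not increase curve lengths, so $\hat g$-distances between points of a single copy coincide with their $g$-distances; hence for $p\in\partial M$ the set $B_{DM}(p,1)$ is the union of the two copies of $B_M(p,1)$, overlapping in the $\hat g$-null set $B_{\partial M}(p,1)$, so that
\[
 \vol_{\hat g}\bigl(B_{DM}(p,1)\bigr)=2\,\vol_g\bigl(B_M(p,1)\bigr).
\]

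Granting Theorem~\ref{thm:gap} for $(DM,\hat g)$, namely a constant $c(n)<\omega_n$ with $\sup_{q\in DM}\vol_{\hat g}(B_{DM}(q,1))\le c(n)$, the displayed identity gives $\sup_{p\in\partial M}\vol_g(B_M(p,1))\le\tfrac12 c(n)<\tfrac12\omega_n$, which is \eqref{eq:gap_bdry}; the factor $\tfrac12$ is simply the ratio of a boundary ball to its double. The content is in making $DM$ admissible for Theorem~\ref{thm:gap}, since it is not smooth. I would mollify $\hat g$ in an $\varepsilon$-tube around $\Sigma$ to a smooth metric $g_\varepsilon$ on $DM$, equal to $\hat g$ outside the tube, with $g_\varepsilon\to\hat g$ uniformly and $\Sc_{g_\varepsilon}\ge 4\pi^2 n(n-1)-\varepsilon$, $\Ric_{g_\varepsilon}\ge-\varepsilon$; convexity of $\partial M$ is exactly what lets the scalar and almost-nonnegative Ricci lower bounds survive the smoothing, both distributional contributions along $\Sigma$ having the right sign. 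Theorem~\ref{thm:gap}, together with its inputs Theorem~\ref{thm:limit_n-2} and the cube inequality Theorem~\ref{thm:cubeineq}, is stable under relaxing $\Ric\ge 0$ to $\Ric\ge-\varepsilon$ with $\varepsilon\to 0$: the non-collapsed compactness and splitting theory still yields $\RCD(0,n)$ limits, while the cube inequality sees only the scalar bound. Hence $\sup_q\vol_{g_\varepsilon}(B(q,1))\le c(n,\varepsilon)<\omega_n$, and a compactness argument forbids $\lim_{\varepsilon\to0}c(n,\varepsilon)=\omega_n$ — such a limit would be $\R^n$, contradicting the first assertion of Theorem~\ref{thm:limit_n-2} — so $c(n):=\lim_{\varepsilon\to0}c(n,\varepsilon)<\omega_n$. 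Letting $\varepsilon\to 0$ and using $g_\varepsilon\to\hat g$, and thus the convergence of balls and volumes, transfers the bound to $\hat g$ and completes the proof.

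The main obstacle is precisely this last step: the interplay, along the convex boundary, between the corner-smoothing and the simultaneous preservation of a scalar lower bound and an almost-nonnegative Ricci lower bound, together with the verification that the proof of Theorem~\ref{thm:gap} really is insensitive to an infinitesimal loss in the Ricci bound. A plausible alternative, possibly shorter in view of the boundary regularity theory of non-collapsed $\RCD(K,N)$ spaces developed in Section~\ref{sec:RCD}, is to run the contradiction argument underlying Theorem~\ref{thm:gap} directly with the $\RCD(0,n)$ doubles $DM_i$ of a hypothetical near-extremal sequence $(M_i,g_i,p_i)$ with $p_i\in\partial M_i$: the doubled volumes tend to $\omega_n$, the pointed limit is $\R^n$ by maximal-volume rigidity, and one contradicts the first assertion of Theorem~\ref{thm:limit_n-2} by applying the cube inequality in the genuinely smooth interior regions of the $DM_i$, where $\Sc\ge 4\pi^2 n(n-1)$, using that a cube face meeting the convex hypersurface $\partial M_i$ is mean-convex so that the argument goes through unchanged.
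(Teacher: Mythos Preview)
Your doubling-and-smoothing approach is correct and is exactly the alternative the paper itself records in the remark immediately following Corollary~\ref{cor:gap_boundary}; there the smoothing step is delegated to \cite{Chow_Positivity} and the Appendix of \cite{Gromov_metric_inequality}, which produce a smooth metric on the double with $\Ric\ge 0$ and the scalar lower bound preserved \emph{exactly}, so the $\varepsilon$-bookkeeping you carry out---and the obstacle you flag at the end---become unnecessary once you invoke those references. The paper's actual proof, however, takes a genuinely different route that avoids doubling altogether: it runs the contradiction argument of Theorem~\ref{thm:gap} directly on manifolds with boundary, replacing Euclidean volume rigidity and Theorem~\ref{lem:top} by their boundary analogues---the half-space volume rigidity \cite{BNS20}*{Theorem 8.2}, which forces $B(p_\infty,\tfrac{99}{100})$ to be a Euclidean half-ball in $\R^n_+$, and the boundary topological-stability theorem \cite{BNS20}*{Theorem 1.2(iii)}---and then embeds a cube of side just over $1/\sqrt n$ in the half-ball model, pushes it into $M_i$, and contradicts Theorem~\ref{thm:cubeineq}. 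Your route has the virtue of reducing cleanly to the boundaryless case at the price of a smoothing lemma; the paper's route trades that lemma for the $\RCD$ boundary-regularity package and needs no metric approximation at all. Your second alternative, running the argument on the non-smooth doubles $DM_i$, is a hybrid that would still have to handle cubes crossing the singular hypersurface; the paper sidesteps this entirely by staying on one side and using the half-space model.
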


\begin{remark}
Corollary \ref{cor:gap_boundary} can also be obtained by taking the double of $M$. Note that the boundary of $M$ is convex, we can smooth the metric on double of $M$ near the boundary of $M$ to obtain a complete, non-compact manifold with non-negative Ricci curvature and uniformly positive scalar curvature. The details can be seen from \cite{Chow_Positivity} and the Appendix in \cite{Gromov_metric_inequality}.  
\end{remark}


\bigskip
When we restrict the consideration of Ricci limit spaces to the ones obtained at infinity of a manifold with non-negative Ricci curvature and uniformly positive scalar curvature, we can extract some information on the volume growth rate of geodesic balls. We prove, 

\begin{theorem}\label{thm:splitting}
If $(M^n,g)$ is a complete, non-compact, $n$-dimensional manifold with $\Ric_{g}\ge 0$ and $\Sc_{g}\ge n(n-1)$, then there exists a constant $c(n)>0$ such that
\begin{equation}\label{eq:infvol}
    \inf_{p\in M} \vol_g(B(p,r))\le c(n)r^{n-2}, \quad \forall r\ge 0.
\end{equation}
Moreover, the asymptotic volume ratio is $0$, i.e.,
\begin{equation}\label{eq:avr0}
    \lim_{r\to \infty} \frac{\vol_g(B(p,r))}{r^n}=0.
\end{equation}

\end{theorem}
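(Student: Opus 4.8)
The plan is to derive both \eqref{eq:infvol} and \eqref{eq:avr0} from a single rigidity principle that follows directly from the volume gap estimate:

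\emph{no non-collapsed $\RCD(0,n)$ space arises as a pointed Gromov--Hausdorff limit of complete, non-compact $n$-manifolds $(M_i,h_i,z_i)$ with $\Ric_{h_i}\ge 0$ and $\sigma_i:=\inf_{M_i}\Sc_{h_i}\to\infty$.} To see this, apply Theorem~\ref{thm:gap} to the rescaled metrics $\tfrac{\sigma_i}{4\pi^2 n(n-1)}h_i$: this gives $\vol_{h_i}(B(q,\rho))\le c(n)\rho^n$ with $c(n)<\omega_n$ for every $q\in M_i$ and every $\rho\ge\rho_i:=2\pi\sqrt{n(n-1)/\sigma_i}$, where the passage from scale $\rho_i$ to scale $\rho$ uses Bishop--Gromov monotonicity. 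Since $\rho_i\to0$, continuity of volume under non-collapsed convergence forces $\haus^n(B(z,\rho))\le c(n)\rho^n$ for all $z$ in the limit and all $\rho>0$, which is incompatible with non-collapsedness, because at every point of the dense full-measure regular set the volume density tends to $\omega_n$ as $\rho\to0$.

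Conclusion \eqref{eq:avr0} is immediate: if the asymptotic volume ratio of $(M,g)$ were positive, the blow-downs $(M,r_i^{-2}g,p)$, $r_i\to\infty$, would be uniformly non-collapsed and would subconverge (Cheeger--Colding) to a non-collapsed metric cone of dimension $n$, while $\Sc_{r_i^{-2}g}=r_i^2\Sc_g\ge r_i^2n(n-1)\to\infty$ --- contradicting the principle above.

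For \eqref{eq:infvol} I argue by contradiction. If it fails, then taking $c_i=i$ there are radii $r_i$ with $\vol_g(B(q,r_i))>c_ir_i^{n-2}$ for \emph{every} $q\in M$, and Bishop--Gromov forces $r_i\to\infty$. Choose $q_i$ nearly minimizing $q\mapsto\vol_g(B(q,r_i))$; the minimal value $m_i$ satisfies $m_i\ge c_ir_i^{n-2}$, while $m_i/r_i^n\le\vol_g(B(p,r_i))/r_i^n\to0$ by \eqref{eq:avr0}. Hence $(M,r_i^{-2}g,q_i)$ collapses, subconverging to an $\RCD(0,n)$ space $X$ of dimension $k\le n-1$, with $\Sc_{r_i^{-2}g}\to\infty$. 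The scalar curvature blow-up excludes a codimension-one collapse: if $k=n-1$, rescaling the collapsing circle direction to unit size would yield a non-collapsed $n$-dimensional limit isometric to $\R^{n-1}\times\mathbb{S}^1_{\ell}$, which is ruled out by Theorem~\ref{thm:limit_n-2}\eqref{main:item1} when $\ell$ stays bounded below and by the stability/$\mu$-bubble estimate underlying it when $\ell\to0$. So $k\le n-2$. Writing $B(q_i,r_i)$ as an approximate fibration over $X$ with fibre of $g$-diameter $d_i$, the volume formula for collapsing gives $m_i\approx\vol_X(B(\bar q_i,1))\,d_i^{\,n-k}r_i^{\,k}$, so $m_i\ge c_ir_i^{n-2}$ becomes $\vol_X(B(\bar q_i,1))\,d_i^{\,n-k}\gtrsim c_i r_i^{\,n-2-k}$; since $n-2-k\ge0$ and $\vol_X(B(\bar q_i,1))\le\omega_k$, this forces $d_i\to\infty$, while $d_i=o(r_i)$ because $m_i=o(r_i^n)$. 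Rescaling $g$ by $d_i^{-2}$ then makes the fibre unit-sized and the base a Euclidean factor, producing a non-collapsed $n$-dimensional $\RCD(0,n)$ limit with $\Sc_{d_i^{-2}g}=d_i^2\Sc_g\ge d_i^2n(n-1)\to\infty$ --- again contradicting the rigidity principle.

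I expect the main obstacle to be the collapsing analysis in the last paragraph: converting the heuristic ``$B(q_i,r_i)$ fibres over $X$ with a fibre of controlled diameter'' into rigorous statements, and verifying that the fibre blow-up genuinely converges to a non-collapsed $n$-dimensional space, will require Cheeger--Fukaya--Gromov-type collapsing theory adapted to the low-regularity Ricci-limit setting, carefully combined with the positive-scalar-curvature rigidity of Theorem~\ref{thm:limit_n-2} and the $\mu$-bubble/stability techniques behind it; the exclusion of the codimension-one case is the other delicate point.
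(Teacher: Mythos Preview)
Your rigidity principle is correct and yields a clean proof of \eqref{eq:avr0}: if the asymptotic volume ratio were positive, blow-downs would be non-collapsed with scalar curvature tending to infinity, and the bound $\haus^n(B(z,\rho))\le c(n)\rho^n<\omega_n\rho^n$ you derive contradicts the density $\omega_n$ at regular points. This is a genuinely different route from the paper, which obtains \eqref{eq:avr0} as a by-product of the argument for \eqref{eq:infvol} (via Theorem~\ref{thm:limit_n-2}) rather than from Theorem~\ref{thm:gap}.

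For \eqref{eq:infvol}, however, there is a real gap --- precisely the one you flag. After rescaling by $r_i^{-2}$ the sequence collapses, and everything that follows (the approximate fibration over $X$, the fibre diameter $d_i$, the volume formula $m_i\approx \vol_X(B(\bar q_i,1))\,d_i^{\,n-k}r_i^{\,k}$, and ``rescaling the collapsing circle direction'' to exclude $k=n-1$) requires Cheeger--Fukaya--Gromov structure theory, which is only available under two-sided sectional curvature bounds. With just $\Ric\ge 0$ there is no well-defined collapsing direction to rescale, and no reason the resulting blow-up should converge to anything non-collapsed of dimension $n$. This paragraph is heuristic, not a proof.

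The paper sidesteps this entirely by \emph{never rescaling}. Under the contradiction hypothesis, Bishop--Gromov gives $\vol_g(B(p,1))\ge C_1 r_1^{-2}>0$ for every $p$, so $(M,g)$ is everywhere non-collapsed at unit scale. One then runs an iterated splitting argument at the original scale: move the basepoint to infinity along a ray to get a non-collapsed limit $\tilde X^0\times\R$; the volume hypothesis forces $\haus^{n-1}(B(\tilde p^0_\infty,r_i))\ge C_i r_i^{n-3}\to\infty$, so $\tilde X^0$ is non-compact and itself has a ray; repeat inside $\tilde X^0$, diagonalize, and after $n-1$ steps obtain points $q_i\in M$ with $(M,g,q_i)$ converging to a non-collapsed limit that splits off $\R^{n-1}$, contradicting Theorem~\ref{thm:limit_n-2}\eqref{main:item1}. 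All limits are taken in the original metric, so no collapsing analysis is needed.
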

 Theorem \ref{thm:splitting} is an attempt to partially answer Gromov's conjecture \cite{Zhu_Geometryofpsc}*{Problem 1.6}. 
It can be seen as a step to understand the asymptotic behavior of volume growth of the geodesic ball in a complete, non-compact manifold with non-negative Ricci curvature and uniformly positive scalar curvature. In fact, \cite{Zhu_Geometryofpsc}*{Corollary 1.9} can only be applied as $n=3$; as $n \geq 4$, we still do not understand the relation between the volume growth of geodesic balls and the splitting of lines at infinity. 

\bigskip
 Finally, we characterize the volume growth of the geodesic balls on three-dimensional manifolds. For related work in three-dimensional dimension, see \cites{Zhu_Geometryofpsc, Chodosh_Chao_volumegrowth, MW_geometryofpsc, Petrunin_ScalarIntegral}. We define 
\[
V_g(r) = \sup_{p \in M} \vol_g(B(p,r)).
\]

\begin{proposition}\label{minimalvolume}
Suppose that $(M^3,g)$ is a complete, non-compact, three-dimensional Riemannian manifold with nonnegative Ricci curvature $\Ric_g \geq 0$ and uniformly positive scalar curvature $\Sc_g \geq 2$. If the sectional curvature $\sec_g \geq -\Lambda^2, \ \Lambda>0$, then, 
\begin{enumerate}
    \item There exists a constant $c_\Lambda\defeq c(\Lambda)$ such that for any $r \geq 0$,
  \[
        V_g(r) \leq c_\Lambda r;
  \]
    \item For any $p \in M$, there exists a universal constant $c_2$ independent of $p$ and $\Lambda$ such that
  \[
        \limsup_{r \rightarrow \infty} \frac{\mr{vol}(B(p,r))}{r} < c_2
    \]
\end{enumerate}
\end{proposition}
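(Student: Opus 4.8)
The plan is to reduce the statement to the Cheeger--Gromoll splitting theorem, using Theorem~\ref{thm:splitting} (which in dimension $3$ gives $\inf_{p}\vol_g(B(p,r))\le c(3)\,r$ for all $r$ together with vanishing asymptotic volume ratio) to eliminate degenerate configurations, and using $\sec_g\ge-\Lambda^2$ as the device that keeps the relevant limit spaces under control.

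\emph{Step 1 (the case where the universal cover splits).} Pass to the universal cover $\pi\colon\widetilde M\to M$ with the pulled-back metric $\widetilde g$; then $\Ric_{\widetilde g}\ge0$ and $\Sc_{\widetilde g}\ge2$. If $(\widetilde M,\widetilde g)$ contains a line, then by Cheeger--Gromoll it splits isometrically as $\R^k\times N$ with $N$ line-free and $k\ge1$. A Euclidean factor of dimension $\ge2$ would make $\widetilde M=\R^2\times(\text{a }1\text{-manifold})$, hence flat, contradicting $\Sc_{\widetilde g}\ge2$; so $k=1$ and $N$ is a complete surface with $\Ric_N\ge0$ and $\Sc_N=\Sc_{\widetilde g}\ge2$, i.e.\ $\sec_N=\tfrac12\Sc_N\ge1$. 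By Bonnet--Myers $N$ is compact (and, being simply connected, $N=\mb S^2$), and by Bishop--Gromov $\vol(N)\le4\pi$. Since a Riemannian covering is volume non-increasing and $B_{\widetilde M}(\widetilde p,r)\subseteq(t_0-r,t_0+r)\times N$ in the product coordinates, one gets $\vol_g(B(p,r))\le\vol_{\widetilde M}(B(\widetilde p,r))\le 2r\,\vol(N)\le 8\pi r$ for all $p$ and $r$; together with $\vol_g(B(p,r))\le\omega_3 r^3\le 8\pi r$ for $r\le1$, this already gives (1) with $c_\Lambda=8\pi$ and (2) with $c_2=8\pi$, with no dependence on $\Lambda$.

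\emph{Step 2 (the case where the universal cover does not split).} Then $M$ has a single end, and by the classification of complete non-compact $3$-manifolds with $\Ric\ge0$ of Gang Liu, $\widetilde M$ is either diffeomorphic to $\R^3$ or splits isometrically off an $\R$-factor; the latter is Step~1, and the former is impossible since $\R^3$ carries no complete metric of uniformly positive scalar curvature. Thus Step~1 actually covers everything. If instead one wishes to stay inside the Alexandrov/RCD framework of the paper and avoid this classification, one argues directly from $\sec_g\ge-\Lambda^2$: then $(M,g)$ is an Alexandrov space of curvature $\ge-\Lambda^2$, Theorem~\ref{thm:splitting} forces every tangent cone at infinity of $M$ to have dimension $<3$, and the lower sectional bound is used to control the collapsing occurring in any blow-down, so as to $(a)$ rule out a $2$-dimensional tangent cone at infinity (which would give quadratic volume growth) and $(b)$ show that the linear regime is entered at a scale $R_0=R_0(\Lambda)$ uniform in the base point; then $\vol_g(B(p,r))\le c_\Lambda r$ for $r\ge R_0(\Lambda)$, and hence for all $r\ge0$ after absorbing the small-scale bound. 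This is the main obstacle in the self-contained route: making quantitative, from the two lower curvature bounds and Theorem~\ref{thm:splitting} alone, why no super-linear volume growth can occur and why the transition scale is uniform --- and it is precisely here that the constant in (1) picks up its dependence on $\Lambda$.

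\emph{Step 3 (universality of $c_2$ in (2)).} Fix $p$ and choose $r_i\to\infty$ realizing $\ell:=\limsup_{r\to\infty}\vol_g(B(p,r))/r$, finite by (1). Normalizing the measures by $r_i^{-1}$, the pointed metric-measure spaces $\big(M,\ r_i^{-1}d_g,\ r_i^{-1}\vol_g,\ p\big)$ subconverge (Gromov precompactness, $\Ric_g\ge0$) to a $1$-dimensional $\RCD(0,3)$ space $(Z,\mu,o)$ with $\mu(B(o,s))\le\ell s$ and $\mu(B(o,1))=\ell$; normalizing by $r_i^{-1}$ rather than $r_i^{-3}$ is dictated by the vanishing of the asymptotic volume ratio, which makes $Z$ one-dimensional. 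Since $\sec_g\ge-\Lambda^2$ rescales to $\sec\ge-r_i^2\Lambda^2$ and so carries no information in the limit, $\ell$ is controlled using only $\Ric_g\ge0$ and $\Sc_g\ge2$: a Bonnet--Myers-type bound on the collapsing level spheres $\partial B(p,r_i)$, in the spirit of Step~1 and of Theorem~\ref{thm:limit_n-2}(3) for $n=3$, bounds $\ell$ by a universal constant $c_2$, which is exactly assertion (2).
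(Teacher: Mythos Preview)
Your Step~2 contains a genuine error that collapses the whole argument. The claim that ``$\R^3$ carries no complete metric of uniformly positive scalar curvature'' is false: the capped half-cylinder --- a warped product $dr^2+f(r)^2 g_{\mb S^2}$ with $f(r)=\sin r$ near $0$ and $f\equiv 1$ for large $r$, smoothed with $f''\le 0$ --- is diffeomorphic to $\R^3$, has $\Ric\ge 0$, and has $\Sc\ge 2$. (A two-dimensional version of exactly this example appears in the remark following Corollary~\ref{cor:betti}.) So the non-splitting case of Liu's classification is \emph{not} vacuous; in fact it is the generic case for the proposition, and your Step~1 covers essentially nothing beyond literal products $\R\times N^2$ and their quotients. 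Your own alternative route in Step~2 is, as you concede, only a sketch of an obstacle rather than an argument: you do not explain how the Alexandrov lower bound $\sec\ge -\Lambda^2$ rules out $2$-dimensional tangent cones at infinity or produces a uniform transition scale $R_0(\Lambda)$.

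The paper's proof proceeds quite differently and this is where the dependence on $\Lambda$ genuinely enters. It uses the $\mu$-bubble lemma of Chodosh--Li to produce, in every annulus $\{a\le \dist(p_0,\cdot)\le a+4\pi\}$, a separating closed surface of diameter at most $\sqrt 3\,\pi$; this is a scalar-curvature input and gives the universal constant in (2). The sectional curvature lower bound is used via Abresch--Gromoll critical point theory to find a radius $R_\Lambda$ beyond which $\dist(p_0,\cdot)$ has no critical points, so that the exterior is topologically a product and each $4\pi$-slab is trapped in a ball of controlled radius around the bounded-diameter surface. Bishop--Gromov then bounds the volume of each slab by a universal constant, and summing the slabs gives linear growth with slope independent of $\Lambda$; the $\Lambda$-dependence in (1) comes solely from the initial ball $B(p_0,\bar R_\Lambda)$, which disappears in the limsup and yields the universal $c_2$ in (2). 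None of this structure is visible in your proposal, and your Step~3 rescaling argument (normalizing volume by $r_i^{-1}$ while distance by $r_i^{-1}$) does not produce a well-defined limit with the claimed properties.
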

Note that compared to previous work \cites{MW_geometryofpsc, Chodosh_Chao_volumegrowth}, our theorem has an extra assumption of sectional curvature lower bound, and the constant $c_\Lambda$ is weaker than that in \cite{MW_geometryofpsc}, but our constant $c_2$ does not depend on the choice of reference points, which is stronger than the main result of \cite{Chodosh_Chao_volumegrowth}.

\vspace{5mm}
Now, we summarize the key idea in this article. The difficulty we need to overcome is the lack of a proper notion of scalar curvature or its lower bound in a Ricci limit space. The crucial observation is that for a torical band $\mb T^{n-1}\times I$, any Riemannian metric of positive scalar curvature on it must satisfy some metric inequalities. The metric inequalities themselves do not require differentiability to describe, see section \ref{sec:torical_cube}. However, the intuitive idea of applying those metric inequalities to a (non-collapsed) Ricci limit space does not work. This is because even if a non-collapsed Ricci limit space happens to be a topological manifold, its limit metric still need not to be induced by a Riemannian metric. We bypass this issue by finding local pull-back maps from the limit space to the approximating manifolds that preserve the topology and almost preserve the metric, essentially by metric Reifenberg theorem. 
To exploit this observation, 
notice that the Ricci limit space $X$ has a large number of splitting factors. Using the $\RCD$ theory we can classify the remaining non-splitting factor, so we have complete understanding of the topology of $X$. It can then be seen that a long torical band $\mb T^{n-1}\times I$ can be embedded into $X$. Using the pull-back maps described above, we can pull back $\mb T^{n-1}\times I$ into a manifold with uniformly positive scalar curvature where the metric inequalities can be applied to find a contradiction. In this way, we succeed to bypass the issue of not having a proper notion of scalar curvature or its lower bound in a Ricci limit space. 

In some cases, we do not know how to find pull-back maps that preserve the topology. For example when singularities of the type $\R ^{n-2}\times C(\mb S^1_r)$ present, where $C(\mb S^1_r)$ is a cone over a circle of radius $r<1$. This can be ruled out by imposing an upper bound on Ricci curvature in all dimension as mentioned in Remark \ref{rem:codim4}. In dimension $3$ to $8$, we can avoid finding topology preserving maps by using the $\mu$-bubbles, this technique has been used extensively in \cites{Gromov_Zhu_area,Chodosh_Li_Soapbubble,Gromov_four_lectures}.

\bigskip
 This article is organized as follows. In section \ref{sec:prelim} we give an introduction to the $\RCD$ theory and the metric inequalities of positive scalar curvature, along the way we prove some auxiliary results. Section \ref{sec:proofs} is devoted to the proofs of all the theorems in the introduction.

\bigskip
\textbf{Acknowledgements} The authors thank Vitali Kapovitch for his suggestions on the proof of Lemma \ref{lem:classification2D} and Lemma \ref{lem:nc_at_infinity}, and Jikang Wang for informing the authors the references \cites{Wei_universalcover,Ye_bettismooth,Ye_bettiRCD}. Also, the authors would like to thank Wenshuai Jiang, Robin Neumayer, Daniele Semola and Guoliang Yu for their insightful discussions and interests on this topic. The first draft of this article was finished when the fourth author was at the Fields institute, and was supported by Fields postdoctoral fellowship.

\section{Preliminaries}\label{sec:prelim}

\subsection{Introduction to metric measure spaces with Ricci curvature lower bounds}\label{sec:RCD}
Given $K\in \R$ and $N\in [1,\infty]$, an $\RCD(K,N)$ space is a complete and separable metric measure space that has synthetic Ricci curvature lower bound $K$ and dimension upper bound $N$. The notion of dimension here can be taken as the Hausdorff dimension \cite{SturmCDII}*{Corollary 2.5}. For example, a complete $n$-dimensional Riemannian manifold with Ricci curvature lower bound $K$ is an $\RCD(K,n)$ space. However, in general, the dimension upper bound $N$ need not be an integer. In this article, we only consider the case $N<\infty$.
The class of $\RCD$ spaces is an intrinsic generalization of the class of Ricci limit spaces, in the sense that the definition of $\RCD$ spaces does not appeal to any approximating sequence and a Ricci limit space obtained as a pointed-measured-Gromov-Hausdorff limit of sequence of $N$-dimensional manifolds with Ricci curvature lower bound $K$ is an $\RCD(K,N)$ space, which follows from the stability of $\RCD(K,N)$ condition under {\rm pmGH} convergence, see for example \cite{GMS13}. 

The structure theory of $\RCD$ spaces stems from the splitting theorem of Gigli \cite{Gigli_splitting}. We will also make an extensive use of this theorem, so we recall it as follows.

\begin{theorem}\label{thm:RCDsplitting}
    Let $N\in [1,\infty)$ and $(X,\dist,\meas)$ be an $\RCD(0,N)$ space. If $X$ contains a line, then there is an measure preserving isometry from $(X,\dist,\meas)$ to $(X',\dist',\meas')\times (\R,|\cdot|, \leb^1)$, such that
    \begin{enumerate}
        \item when $N\in [1,2)$, $X'$ is a point;
        \item when $N\ge 2$, $(X',\dist',\meas')$ is a $\RCD(0,N-1)$ space.
    \end{enumerate}
Here isometry means that if $x=(x',t)$ and $y=(y',s)$, then $\dist(x,y)^2=\dist'(x',y')^2+|t-s|^2$.
\end{theorem}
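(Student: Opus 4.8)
\emph{Proof strategy (sketch).} The plan is to transplant the Cheeger--Gromoll splitting argument to the non-smooth setting, replacing each smooth tool by its counterpart in the differential calculus on $\RCD(0,N)$ spaces. Let $\gamma\colon\R\to X$ be a line and set $b^{\pm}(x)\defeq\lim_{t\to+\infty}\big(\dist(x,\gamma(\pm t))-t\big)$; these limits exist by monotonicity, each $b^{\pm}$ is $1$-Lipschitz (so $b^{\pm}\in W^{1,2}_{\mathrm{loc}}(X)$ with $|\nabla b^{\pm}|\le 1$ $\meas$-a.e.), and $b^{+}(\gamma(s))=-s=-b^{-}(\gamma(s))$. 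First I would show that $b\defeq b^{+}$ is harmonic: the Laplacian comparison theorem on $\RCD(0,N)$ spaces gives $\mathbf{\Delta}\,\dist(\cdot,\gamma(\pm t))\le\frac{N-1}{\dist(\cdot,\gamma(\pm t))}\meas$ as measure-valued Laplacians, so by stability of the Laplacian under the (locally uniform, uniformly Lipschitz) convergence as $t\to+\infty$ one gets $\mathbf{\Delta}b^{\pm}\le 0$; then $b^{+}+b^{-}$ is superharmonic, it is $\ge 0$ by the triangle inequality, and it vanishes along $\gamma$, so the strong maximum principle forces $b^{+}+b^{-}\equiv 0$, whence $b^{-}=-b^{+}$ and $\mathbf{\Delta}b=0$.

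Next I would extract rigidity from the Bochner inequality. Since $|\nabla b|\le 1$ $\meas$-a.e.\ while $b$ decreases at unit speed along $\gamma$, I would apply the improved Bochner inequality available on $\RCD(0,N)$ spaces, which for the harmonic function $b$ with $K=0$ reads
\[
\tfrac12\,\mathbf{\Delta}\,|\nabla b|^{2}\ \ge\ |\Hess b|^{2}_{\mathrm{HS}}\,\meas\ \ge\ 0 .
\]
Hence $|\nabla b|^{2}$ is subharmonic, and a maximum-principle argument --- using this subharmonicity, the bound $|\nabla b|\le 1$, and the behaviour of $b$ along $\gamma$ --- forces $|\nabla b|\equiv 1$ $\meas$-a.e.; the equality case of the Bochner inequality then gives $\Hess b=0$ in $L^{2}(T^{\otimes 2}X)$, i.e.\ $\nabla b$ is a parallel unit gradient field.

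Then I would construct the splitting out of $\nabla b$. Its (regular Lagrangian) flow $(F_{t})_{t\in\R}$ exists, and using $\Hess b=0$ together with $\mathbf{\Delta}b=0$ one checks that it is a one-parameter group of measure-preserving maps with $b\circ F_{t}=b-t$ that also preserves $\dist$. Setting $X'\defeq b^{-1}(0)$ with the induced distance $\dist'$ and a disintegrated measure $\meas'$, the map $X'\times\R\ni(x',t)\mapsto F_{-t}(x')$ is then a measure-preserving isometry, the Pythagorean identity for $\dist$ following from $|\nabla b|\equiv 1$ and $\Hess b=0$. Stability and tensorization of the curvature-dimension condition identify $(X',\dist',\meas')$ as an $\RCD(0,N-1)$ space; and when $N\in[1,2)$ the product $X=X'\times\R$ is a connected geodesic space of Hausdorff dimension $\le N<2$ containing an isometric copy of $\R$, so $\dim X=1$, $\dim X'=0$, and $X'$ is a single point.

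The main obstacle is the analytic heart of the middle steps: formulating and proving the Bochner inequality \emph{with the Hessian term} for the merely locally Sobolev function $b$ on a possibly non-compact space, and running the self-improvement that upgrades $\mathbf{\Delta}b=0$ and $|\nabla b|\le 1$ to $\Hess b=0$; and then proving that the flow of the parallel field $\nabla b$ consists of genuine metric isometries and exhibits $X$ as the metric-measure product $b^{-1}(0)\times\R$. This is where the full second-order calculus on $\RCD$ spaces is indispensable --- the measure-valued Laplacian, the Hessian and the Bochner identity, and the theory of flows of Sobolev vector fields --- whereas the remaining ingredients (Laplacian comparison, maximum principles, and stability/tensorization of $\RCD$) are by now standard.
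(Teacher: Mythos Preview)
The paper does not give its own proof of this statement: Theorem~\ref{thm:RCDsplitting} is quoted from Gigli \cite{Gigli_splitting} as a preliminary, and is used as a black box throughout. Your sketch is in fact an accurate outline of Gigli's argument (Busemann functions, Laplacian comparison plus strong maximum principle to get harmonicity, Bochner-type rigidity to get a parallel unit gradient, then the flow of $\nabla b$ producing the product structure), and you correctly identify where the real work lies --- the second-order calculus needed for the Hessian Bochner inequality and the proof that the flow of $\nabla b$ consists of genuine metric-measure isometries.

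One point to be careful with: the step ``$|\nabla b|^{2}$ subharmonic, $|\nabla b|\le 1$, hence $|\nabla b|\equiv 1$'' is not a bare maximum-principle argument on a non-compact space. In Gigli's proof the constancy of $|\nabla b|$ is obtained differently, essentially from the identity $b^{+}=-b^{-}$ together with the fact that through every point there pass asymptotic rays along which $b^{\pm}$ vary at unit speed, combined with the Sobolev-to-Lipschitz property of $\RCD$ spaces identifying metric slope with minimal weak upper gradient. Apart from this, your roadmap matches the cited reference.
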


Let $(X,\dist,\meas)$ be an $\RCD(K,N)$ space, where $\meas$ is a Radon measure with $\mr{supp}\ \meas=X$, we say $(X,\dist,\meas)$ is a non-collapsed $\RCD(K,N)$ space, 
if $\meas=\haus^N$, then necessarily $N\in \N$.

\begin{remark}\label{rmk:notRicciLimit}
In general, for a Ricci limit space $X$ obtained as a pmGH limit of a sequence of $n$-dimensional manifolds with non-negative Ricci curvature, if it splits isometrically as $\R^{k}\times Y$ for some positive integer $k<n$, to our best knowledge it is not known in general if $Y$ is a Ricci limit space approximated by $(n-k)$-dimensional manifolds, but with $\RCD$ theory, especially the splitting theorem above, we can easily see that $Y$ with the metric and measure from the splitting procedure is $\RCD(0, n-k)$, and if $X$ is non-collapsed, so is $Y$. 
\end{remark}

 For a sequence of non-collapsed $\RCD(K,N)$ spaces with uniform positive lower bound on the volume of unit balls i.e.\ the so-called non-collapsing sequence, we have the following volume convergence theorem from \cites{Cheeger-Colding97I,Colding97} and \cite{DPG17}*{Theorem 1.2}. 

\begin{theorem}\label{thm:pGH}
Let $K\in\R$, $N\in \N$ and $(X_i, \dist_i, \haus^N, x_i)$ be a sequence of (pointed) non-collapsed $\RCD(K, N)$ spaces. Then after passing to a subsequence, there exists a (pointed) proper geodesic space $(X, \dist, x)$ such that 
\[
(X_i, \dist_i, x_i) \xrightarrow{\mathrm{pGH}} (X, \dist, x).
\]
Moreover, if $\inf_i\haus^N(B_1(x_i))>0$, then $(X, \dist,\haus^N, x)$ is also a pointed non-collapsed $\RCD(K, N)$ space and the convergence of the $(X_i, \dist_i, \haus^N, x_i)$ to such space is in the ${\rm pmGH}$ topology.
\end{theorem}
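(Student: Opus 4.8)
The plan is to treat the two assertions of the statement in turn. For the first, I would invoke Gromov's precompactness theorem: each $(X_i,\dist_i,\haus^N)$ is an $\RCD(K,N)$ space and hence satisfies Bishop--Gromov volume comparison against the model space determined by $K$ and $N$, so for every $R>0$ and $\eps>0$ the number of $\eps$-balls needed to cover $B_R(x_i)$ is bounded by a constant depending only on $K$, $N$, $R$ and $\eps$. Thus the pointed spaces $(X_i,\dist_i,x_i)$ form a precompact family in the pointed Gromov--Hausdorff topology, so after passing to a subsequence they converge to a pointed complete length space $(X,\dist,x)$; uniform total boundedness of balls passes to the limit, whence $X$ is proper, and a proper complete length space is geodesic.

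For the second assertion I would first upgrade the convergence to the pointed-measured category. By Bishop--Gromov the measures $\haus^N$ are uniformly locally finite on the $X_i$, and the hypothesis $\inf_i\haus^N(B_1(x_i))>0$ propagates (again by volume comparison) to a uniform local lower volume bound around every point, not just the basepoints. Hence, after a further subsequence, $(X_i,\dist_i,\haus^N,x_i)$ converges in the pointed-measured Gromov--Hausdorff topology to $(X,\dist,\meas,x)$ for a nonzero Radon measure $\meas$ on $X$ with $\meas(B_1(x))>0$ and $\mathrm{supp}\,\meas=X$. By the stability of the $\RCD(K,N)$ condition under pmGH convergence (cf.\ \cite{GMS13}, recalled above), $(X,\dist,\meas,x)$ is then a pointed $\RCD(K,N)$ space.

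The remaining and only substantial point is to identify $\meas$ with $\haus^N$; this is the volume convergence theorem of Cheeger--Colding, in the generality of \cite{DPG17}*{Theorem 1.2}. I would argue as follows. Weak convergence of measures together with the uniform Bishop--Gromov bound gives two-sided volume continuity, $\meas(B_r(y))=\lim_i\haus^N(B_r(y_i))$ whenever $y_i\to y$ and $r$ avoids the countably many radii with $\meas(\partial B_r(y))>0$; in particular $\meas(B_r(y))$ is squeezed between positive multiples of the model volumes. By the structure theory of $\RCD(K,N)$ spaces, $X$ has a well-defined integer essential dimension $k\le N$; the non-collapsing bound forbids the $N$-density from vanishing $\meas$-almost everywhere, which forces $k=N$, so $\haus^N$-a.e.\ point of $X$ is an $N$-regular point with tangent cone $\R^N$. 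At such a point the Bishop--Gromov-monotone ratio $\meas(B_r(y))/(\omega_N r^N)$ has a limit as $r\to 0$, the $N$-density $\theta(y)$, and $\theta(y)=1$ because the tangent cone is Euclidean and the volume-cone-to-metric-cone rigidity fixes the normalization. A maximal-function and differentiation argument on the density (as in Cheeger--Colding and \cite{DPG17}) then gives $\meas=\theta\,\haus^N$ with $\theta\equiv 1$, i.e.\ $\meas=\haus^N$. Consequently $(X,\dist,\haus^N,x)$ is a pointed non-collapsed $\RCD(K,N)$ space and the convergence is pmGH.

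The main obstacle is exactly this last identification. Without the non-collapsing hypothesis it is false --- the limit can drop dimension, or the limit measure may fail to be a constant multiple of $\haus^N$ --- and even with it the proof is not soft: it genuinely uses the fine structure theory (that $\RCD(K,N)$ spaces have a well-defined essential dimension, that the regular set is $N$-rectifiable with Euclidean tangents, and that volume cones are metric cones), rather than merely the synthetic $\CD(K,N)$/Bishop--Gromov inequalities. Everything else in the statement --- precompactness, properness and geodesy of the limit, extraction of a nonzero limit measure with full support, and stability of the synthetic curvature-dimension condition --- is soft and follows from general compactness principles together with the results recalled above.
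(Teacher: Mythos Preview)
The paper does not supply its own proof of this statement: it is quoted as a known result, attributed to \cites{Cheeger-Colding97I,Colding97} and \cite{DPG17}*{Theorem 1.2}, and used as a black box in the preliminaries. Your outline is correct and is exactly the route taken in those references---Gromov precompactness for the pGH limit, Prokhorov-type compactness plus stability of the $\RCD$ condition for the pmGH upgrade, and then the De Philippis--Gigli volume convergence argument (essential dimension equals $N$, density $1$ at $N$-regular points, hence $\meas=\haus^N$). You have also correctly isolated the only nontrivial step as the identification $\meas=\haus^N$; everything else is indeed soft.
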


As a consequence, non-collapsed Ricci limit space are non-collapsed $\RCD$ spaces. For any integer $N\ge 1$, we say a non-collapsed Ricci limit space is $N$-dimensional if it is a p(m)GH limit of a sequence of $N$-dimensional manifolds. 
Meanwhile, unlike (classical) non-collapsed Ricci limit spaces, non-collapsed $\RCD$ spaces can have boundary. To make this statement rigorous, we first introduce the notion of regular and singular set and then define the boundary of a non-collapsed $\RCD(K,N)$ space. From now on we fix an $\RCD(K,N)$ space $(X,\dist,\meas)$.

\begin{definition}
     Let $\mathcal{R}_k$ be the set of points in $X$ at which the tangent cone is isometric to $(\R^k,\dist_{\R^k},\leb^k)$, for $k\in [1,N]\cap \N$. $\mathcal{R}\defeq \cup_k \mathcal{R}_k$ is called the regular set of $X$. The singular set $\mathcal{S}$ is the complement of the regular set, i.e., $\mathcal{S}\defeq X\setminus \mc{R}$.
\end{definition}

 It is well-known that $\mathcal{S}$ has measure $\meas$-measure zero, see \cite{MN14}*{Theorem 1.1} (after \cite{Cheeger-Colding97I}). When $\meas=\haus^N$, i.e.\ $(X,\dist, \haus^N)$ is a non-collapsed $\RCD(K,N)$ space, we have $\mathcal{R}=\mc{R}_N$ and that the singular set $\mathcal{S}$ is stratified into 
\[
\mathcal{S}^0\subset \mathcal{S}^1\subset \cdots\subset \mathcal{S}^{N-1},
\]
where for $0\le k\le N-1$, $k\in \N$, 
$$
\mathcal{S}^k=\{x\in \mathcal{S}: \text{no tangent cone at $x$ is isometric to } \R^{k+1}\times C(Z)\text{ for any metric space } Z\},
$$ 
where $C(Z)$ is the metric measure cone over a metric space $Z$. It is proved in \cite{DPG17}*{Theorem 1.8} that
\begin{equation}\label{eq:sing}
    \dim_{\haus}(\mathcal{S}^k)\le k.
\end{equation}
We are in position to define the boundary.

\begin{definition}[\cite{DPG17}*{Remark 3.7}]
The De Philippis-Gigli boundary of $(X,\dist,\haus^N)$ is defined as
\begin{equation}\label{eq:DPGboundary}
    \partial X\defeq \overline{\mathcal{S}^{N-1}\setminus \mathcal{S}^{N-2}}.
\end{equation}
\end{definition}

$N$-dimensional Alexandrov spaces with curvature lower bound $K$ are non-collapsed $\RCD(K(N-1), N)$ spaces, and there is also an intrinsic notion of boundary for Alexandrov spaces. It is shown in \cite{BNS20}*{Theorem 7.8} that when an Alexandrov space is viewed as a non-collapsed $\RCD$ space, the two notions of boundary coincide.
On the other hand, it is shown in \cite{Cheeger-Colding97I}*{Theorem 6.2} that if $(X, \dist,\haus^N)$ is a non-collpased Ricci limit space, then $\mc{S}^{N-1}(X)\setminus \mc{S}^{N-2}(X)=\emptyset$, it means a non-collpased Ricci limit space has empty boundary. Under the notion of boundary of a non-collapsed $\RCD$ space, we can also include manifolds with boundary into our study that gives rise to the class of Ricci limit spaces with boundary. This is a subclass of non-collapsed $\RCD$ spaces. 

\begin{definition}\label{def:RicLimBd}
A noncollapsed Ricci limit space with boundary is a ${\rm pmGH}$ limit of (pointed) complete manifolds of the same dimension, with smooth convex boundary and uniform Ricci curvature lower bound in the interior.
\end{definition}

Here, the boundary is said to be convex if the second fundamental form of the boundary is non-negative definite. We refer the readers to \cite{BNS20} for a comprehensive treatment of the boundary of non-collapsed $\RCD$ spaces. The second fundamental form of the boundary is defined as $ - \nabla \nu $ where $\nu$ the inner unit normal vector field. With this convention, $n$-unit sphere $\mathbb{S}^n$ in $\mathbb{R}^{n+1}$ has second fundamental form $A=I_n$. 

\begin{remark}
From the computation carried out by Han, we can infer that if a manifold with boundary is an $\RCD(K,N)$ space (hence also a $\RCD(K,\infty)$ space), then the boundary is necessarily convex, see \cite{Han20}*{Corollary 2.5}.
\end{remark}

It is worth pointing out that the class of non-collapsed $\RCD(K,N)$ spaces without boundary is strictly larger than the class of non-collapsed Ricci limit spaces. For example $C(\R P^2)$ is a non-collapsed $\RCD(0,3)$ space due to Ketterer \cite{Ketterer_cone}, but this is not a $3$-dimensional topological manifold, since $C(\R P^2)\setminus \{\text{cone tip}\}$ deformation retracts to $\R P^2$ while $\R^3\setminus\{0\}$ deformation retracts to $\mb S^2$, whereas Simon-Topping in \cite{SimonTopping_Ricci} proved that any $3$-dimensional non-collapsed Ricci limit space is a topological manifold. 

When a non-collapsed Ricci limit space has no singular points, we have the following topological stability theorem from \cite{KapMon19}*{Theorem 3.3} (after \cite{Cheeger-Colding97I}*{Appendix A} which addresses the compact case). 

\begin{theorem}\label{lem:top}
Let $(M_i,g_i, q_i)$ be a sequence of complete Riemannian manifolds with $\mr{Ric}_{g_i}\ge 0$, and $\inf_i \mr{vol}_{g_i}(B(q_i,1))>\nu>0$ such that
$$
(M,g, q_i)\xrightarrow{\mr{pGH}} (X,\dist, q_\infty),
$$ 
where $(X,\dist)$ satisfies $\mc{R}(X)=X$, then for any fixed $\alpha\in (0,1)$, any $R>0$, any $p_i\to p_\infty$ along with the pGH convergence and a large $i$, there exists $\alpha$-bi-H\"older embedding $f_i: B(p_i, R) \rightarrow B(p_\infty, R+\epsilon_i)$ which is also an $\epsilon_i$-GH approximation with $\epsilon_i \rightarrow 0$ and $\dist_{g_i}(f_i(p_i), p_\infty) \leq \epsilon_i$.
\end{theorem}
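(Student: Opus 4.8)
The plan is to reduce the statement to the metric Reifenberg theorem of Cheeger--Colding (\cite{Cheeger-Colding97I}*{Appendix A}; in the $\RCD$ formulation recorded in \cite{KapMon19}) by checking its hypotheses \emph{uniformly} on $M_i$ for $i$ large and on $X$, and then to feed the given Gromov--Hausdorff approximation into the construction underlying that theorem to produce $f_i$. Precisely, I would first show that for every $\delta>0$ and every $\rho>0$ there are $r_0=r_0(\delta,\rho,X)>0$ and $i_0$ such that
\begin{equation*}
\dist_{GH}\!\big(B(y,r),\, B_{\R^N}(0,r)\big)\le \delta r
\end{equation*}
for all $0<r\le r_0$ and all $y\in B(q_\infty,\rho)\subset X$, and likewise for all $y\in B(q_i,\rho)\subset M_i$ with $i\ge i_0$. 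Granting this, the metric Reifenberg theorem yields bi-Hölder charts on both spaces with Hölder exponent $\alpha(\delta,N)\to 1$ as $\delta\to 0$; since those charts are manufactured from Gromov--Hausdorff approximations at a fixed dyadic family of scales, the construction is stable along the pGH convergence $M_i\to X$, so composing a chart on $B(p_i,R)$ with the inverse of the matching chart on $X$ and patching produces the desired $\alpha$-bi-Hölder embedding $f_i$ onto an open subset of $B(p_\infty,R+\epsilon_i)$; its being an $\epsilon_i$-GH approximation with $\dist(f_i(p_i),p_\infty)\le\epsilon_i$ is inherited (after relabeling $\epsilon_i$) from the GH approximation one starts with.

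For the uniform Reifenberg bound on $X$: by Theorem \ref{thm:pGH} the limit $(X,\dist,\haus^N)$ is a non-collapsed $\RCD(0,N)$ space, and $\mc R(X)=X$ says every point of $X$ has density $\lim_{r\to 0}\haus^N(B(x,r))/(\omega_N r^N)=1$. I would argue by contradiction with a blow-up: if $\haus^N(B(y_j,r_j))<(1-\eta)\,\omega_N r_j^N$ for some $y_j\in B(q_\infty,\rho)$ and $r_j\to 0$, then, passing to $y_j\to y_\infty$ and using Bishop--Gromov monotonicity on $X$ together with continuity of $\haus^N$ of balls under pGH convergence (again Theorem \ref{thm:pGH}), the density at $y_\infty$ would be $\le 1-\eta<1$, so $y_\infty\in\mc S$, contradicting $\mc R(X)=X$. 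Hence $\haus^N(B(y,r))\ge(1-\eta)\omega_N r^N$ uniformly on $B(q_\infty,\rho)$ for small $r$, and almost-maximal volume of a ball in a non-collapsed $\RCD(K,N)$ space forces Gromov--Hausdorff closeness to the Euclidean ball of the same radius (volume-cone/metric-cone almost-rigidity, \cite{DPG17}), which is the $\delta$-Reifenberg condition with $\delta=\Psi(\eta\mid N)$.

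Transferring the bound to the manifolds is then cheap: by the volume convergence in Theorem \ref{thm:pGH}, $\vol_{g_i}(B(y_i,r))\to\haus^N(B(y,r))$ whenever $y_i\to y$, so for a fixed small scale $r_0$ and $i$ large one gets $\vol_{g_i}(B(y_i,r_0))\ge(1-2\eta)\omega_N r_0^N$ for every $y_i\in B(q_i,\rho)$; since $\Ric_{g_i}\ge 0$, the Bishop--Gromov monotonicity of $r\mapsto \vol_{g_i}(B(y_i,r))/(\omega_N r^N)$ propagates this lower bound to \emph{all} scales $r\le r_0$ at no cost, giving the uniform $\delta$-Reifenberg condition on $M_i$ and letting the metric Reifenberg machinery run exactly as above.

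The main obstacle is the last step — promoting a Gromov--Hausdorff approximation between two spaces satisfying a uniform Reifenberg condition to an honest bi-Hölder homeomorphism with exponent near $1$, uniformly in $i$. This is precisely the delicate iterative argument in Cheeger--Colding's appendix (successively mollifying the previous approximation in local Euclidean coordinates at each dyadic scale), and one must control both the convergence of that iteration and its compatibility with the convergence $M_i\to X$. A secondary point requiring care is that $\mc R(X)=X$ only provides a \emph{pointwise} density-one statement, and upgrading it to a \emph{uniform} (scale- and point-independent) Reifenberg bound on bounded subsets is exactly what the compactness argument above is for.
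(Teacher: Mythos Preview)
The paper does not prove this statement; it is quoted verbatim as a known result from \cite{KapMon19}*{Theorem 3.3} (after \cite{Cheeger-Colding97I}*{Appendix A}), with no argument given. Your proposal is a correct outline of the proof strategy in those references: derive a uniform Reifenberg condition on compact subsets of $X$ from $\mc R(X)=X$ via a contradiction/compactness argument and almost-volume-cone rigidity, transfer it to the $M_i$ by volume convergence and Bishop--Gromov, and then invoke the Cheeger--Colding iteration to upgrade a GH approximation to a bi-H\"older homeomorphism. So your approach matches the cited literature rather than anything the paper itself supplies.
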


Now we introduce two classification results of $n$-dimensional non-collapsed Ricci limit spaces that split off $\R^{n-1}$ or $\R^{n-2}$. The proofs depend on the classification of non-collapsed $\RCD(0,1)$ spaces, the classification of $2$-dimensional Alexandrov spaces and the fact that non-collapsed Ricci limit spaces do not have boundary. However, there is some subtlety, see Remark \ref{rmk:notRicciLimit}. The first one is essentially the classification of non-collapsed $\RCD(0,1)$ spaces.

\begin{lemma}\label{lem:classification1D}
Let $n\ge 2$ be an integer, and $(X,\dist,\haus^n)$ be a non-collapsed Ricci limit space which is also a non-collapsed $\RCD(0,n)$ space. If $X$ splits isometrically into $\R^{n-1}\times Y$, then $Y$ is isometric to either $S^1$ or $\R$, so $X$ is isometric to $\R^n$ or $\R^{n-1}\times \mb{S}^1$.
\end{lemma}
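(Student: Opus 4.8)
The plan is to reduce everything to the classification of non-collapsed $\RCD(0,1)$ spaces. Since $X = \R^{n-1} \times Y$ is a non-collapsed $\RCD(0,n)$ space (this is given), the splitting theorem of Gigli (Theorem~\ref{thm:RCDsplitting}), applied $n-1$ times to peel off each $\R$-factor, shows that $Y$ with the metric and measure inherited from the splitting is a non-collapsed $\RCD(0,1)$ space; in particular $Y$ is one-dimensional. One then invokes the structure of $\RCD(0,1)$ spaces: a connected non-collapsed $\RCD(0,1)$ space is isometric to one of $\R$, a half-line $[0,\infty)$, a circle $\mb S^1_\ell$ of some length $\ell$, or a closed interval $[0,\ell]$. (This is standard; it follows because a $1$-dimensional length space with a synthetic Ricci bound is a $1$-manifold possibly with boundary, and $\Ric \geq 0$ in dimension $1$ is automatic, so the only constraint is on the topology/completeness.)

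The second step is to eliminate the cases with boundary using the hypothesis that $X$ is a \emph{non-collapsed Ricci limit space}. If $Y$ were $[0,\infty)$ or $[0,\ell]$, then $X = \R^{n-1}\times Y$ would have nonempty De Philippis--Gigli boundary: indeed $\partial(\R^{n-1}\times Y) = \R^{n-1}\times \partial Y \neq \emptyset$, since the product of a smooth boundaryless factor with a space having boundary has boundary (the tangent cone at a point $(x,0)$ with $0\in\partial Y$ is $\R^{n-1}\times \R_{\geq 0} = \R^{n-1}\times C(\mathrm{pt})$, which is a half-space, so such a point lies in $\mathcal S^{n-1}\setminus \mathcal S^{n-2}$). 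But by \cite{Cheeger-Colding97I}*{Theorem 6.2}, a non-collapsed Ricci limit space has empty boundary, i.e.\ $\mathcal S^{n-1}(X)\setminus \mathcal S^{n-2}(X) = \emptyset$. This contradiction rules out $Y \cong [0,\infty)$ and $Y \cong [0,\ell]$, leaving $Y \cong \R$ or $Y \cong \mb S^1_\ell$.

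Finally, in the compact case $Y \cong \mb S^1_\ell$, I would pin down the length. One option: rescale and note that any value $\ell$ is a priori allowed by the $\RCD$ structure alone, so the precise claim $Y\cong \mb S^1$ must be read up to the normalization implicit in the hypotheses, or the statement is simply that $Y$ is \emph{some} circle — I would check the intended normalization in the source and phrase accordingly (the conclusion "$X$ is isometric to $\R^n$ or $\R^{n-1}\times \mb S^1$" suggests the circle's radius is not being fixed and "$\mb S^1$" just denotes the topological type). So the clean statement is: $Y$ is isometric to $\R$ or to a circle.

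The main obstacle, and the only genuinely substantive point, is the second step: confirming that the product $\R^{n-1}\times Y$ genuinely has $\RCD$-boundary when $Y$ does, and then correctly quoting that non-collapsed Ricci limit spaces have empty boundary — here one must be careful, as flagged in Remark~\ref{rmk:notRicciLimit}, that $Y$ itself need not be known to be a Ricci limit space, so the emptiness-of-boundary statement must be applied to $X$, not to $Y$. The other steps (iterated Gigli splitting, classification of $\RCD(0,1)$) are routine given the results already recalled in the excerpt.
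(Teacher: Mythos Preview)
Your proposal is correct and follows essentially the same route as the paper: iterate Gigli's splitting to see that $Y$ is a non-collapsed $\RCD(0,1)$ space, invoke the $1$-dimensional classification (the paper cites \cite{KL15}*{Theorem 1.1}), and rule out the interval cases by observing that $X=\R^{n-1}\times Y$ would then have nonempty $\mc S^{n-1}\setminus\mc S^{n-2}$, contradicting \cite{Cheeger-Colding97I}*{Theorem 6.2}. Your care in applying the no-boundary result to $X$ rather than $Y$ is exactly the right point, and your reading that ``$\mb S^1$'' just means a circle of unspecified length is also correct.
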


\begin{proof}
 By iteratively applying Theorem \ref{thm:RCDsplitting}, along with the fact that the reference measure in $X$ is $\haus^n$, $(Y,\dist_Y,\haus^1)$ is a non-collapsed $\RCD(0,1)$ space, where $\dist_Y$ satisfies $\dist^2=\dist_Y^2+\dist_{\R^{n-1}}^2$. The classification of $1$-dimensional $\RCD$ spaces \cite{KL15}*{Theorem 1.1} implies that $(Y,\dist_Y)$ is isometric to either $\R$, $\mb{S}^1$ or an interval $[0,b]$, $b\in (0,\infty]$. However, $Y$ cannot be $[0,b]$. If this was the case then $X$ would have nonempty $\mc{S}^{n-1}(X)\setminus \mc{S}^{n-2}(X)$, 
a contradiction to \cite{Cheeger-Colding97I}*{Theorem 6.2} we recalled earlier.
\end{proof}

To proceed we need the following.

\begin{theorem}[\cite{LytchakStadler_Ricci_to_Alex}]\label{thm:RCD02=Alex}
   Every non-collapsed $\RCD(0,2)$ space is an $2$-dimensional Alexandrov space of non-negative curvature.
\end{theorem}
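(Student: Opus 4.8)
The plan is to prove Theorem \ref{thm:RCD02=Alex} (the statement that every non-collapsed $\RCD(0,2)$ space is a $2$-dimensional Alexandrov space of non-negative curvature) by combining the fine structure theory of $2$-dimensional non-collapsed $\RCD$ spaces with the fact that non-negativity of curvature in the sense of Alexandrov is a purely metric condition that can be checked via a suitable local-to-global argument. First I would recall that for a non-collapsed $\RCD(K,N)$ space with $N=2$, the singular stratification gives $\dim_{\haus}(\mathcal S^0)\le 0$ and $\dim_{\haus}(\mathcal S^1)\le 1$, but in fact for $N=2$ one has $\mathcal S=\mathcal S^1=\mathcal S^0$ together with the boundary $\partial X=\overline{\mathcal S^1\setminus\mathcal S^0}$; the tangent cone at any point is either $\R^2$ (regular points), a half-plane $\R\times[0,\infty)$ (boundary points), or a metric cone $C(\mathbb S^1_\rho)$ over a circle of circumference $2\pi\rho$ with $\rho\le 1$ (a genuine conical singularity), the last case by the structure of $\RCD(0,1)$ cross-sections combined with the cone rigidity of Ketterer. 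Away from the singular set $X$ is a smooth Riemannian surface, and by Bishop--Gromov together with the non-collapsing hypothesis its Gaussian curvature is bounded below by $0$ (this is where $K=0$ enters, through the Bakry--\'Emery/$\RCD$ comparison restricted to the regular part, or equivalently since a weighted surface with $N=2$ forces the weight to be constant by the rigidity in the splitting/dimension theory, so the Ricci tensor equals the metric times the Gaussian curvature).

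The heart of the argument is then to upgrade ``nonnegatively curved on the smooth part, with only conical and boundary singularities of the right type'' to ``globally Alexandrov with curvature $\ge 0$.'' The cleanest route is a local one: I would verify the Alexandrov four-point/triangle comparison in a neighborhood of each point. Around a regular point this is immediate from $\sec\ge 0$ on the smooth surface via Toponogov. Around a conical singularity with cone angle $2\pi\rho\le 2\pi$, a neighborhood is bi-Lipschitz (indeed, after smoothing, a Gromov--Hausdorff limit) of a cone over a short circle, and cones over circles of circumference $\le 2\pi$ are nonnegatively curved Alexandrov spaces; more robustly, one can invoke that such a conical neighborhood is a monotone limit of smooth nonnegatively curved surfaces (the Richard-type smoothing referenced in Remark \ref{rem:codim4}), and Alexandrov's condition $\mathrm{curv}\ge 0$ is closed under Gromov--Hausdorff limits. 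Around a boundary point, a neighborhood looks like a half-disk in a nonnegatively curved surface with geodesic (or convex, by the $\RCD$-implies-convex-boundary remark of Han) boundary, which is again Alexandrov with $\mathrm{curv}\ge 0$, e.g.\ by doubling across the boundary. Once every point has a neighborhood satisfying the comparison, Toponogov's globalization theorem (Alexandrov's patchwork/Perelman gluing) promotes the local comparison to the global one, yielding that $(X,\dist)$ is a $2$-dimensional Alexandrov space with curvature $\ge 0$. The dimension claim ``$2$-dimensional'' is just the statement that $\haus^2(X)>0$ and $\haus^3(X)=0$, which holds because $\meas=\haus^2$ by the non-collapsing assumption.

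The main obstacle, and the step I would spend the most care on, is the analysis at the genuinely singular points: one must know \emph{a priori} that the only singularities that can occur in a non-collapsed $\RCD(0,2)$ space are conical points $C(\mathbb S^1_\rho)$ with $\rho\le 1$ and boundary points modelled on $\R\times[0,\infty)$, with nothing worse; this rests on the classification of $1$-dimensional non-collapsed $\RCD$ spaces (Lemma \ref{lem:classification1D}'s ingredient \cite{KL15}) applied to cross-sections of tangent cones, the metric-cone rigidity \cite{Ketterer_cone}, and the fact that in dimension $2$ the non-collapsed condition forces the tangent cone at \emph{every} point to be a metric cone with $1$-dimensional cross-section. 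An alternative, and perhaps the route actually taken in \cite{LytchakStadler_Ricci_to_Alex}, is to avoid case analysis entirely by using the equivalence, in low dimension, between synthetic Ricci bounds and synthetic sectional bounds: one shows directly that the $\CD(0,2)$ (equivalently $\RCD(0,2)$, by non-branching) entropy-convexity inequality, on a $2$-dimensional space, is \emph{formally identical} to the Alexandrov curvature $\ge 0$ condition — intuitively because in dimension $2$ the Ricci curvature \emph{is} the sectional curvature — and then one only needs to know the space is a topological surface with mild singularities, which follows from the structure theory; I would present the proof along whichever of these two lines keeps the citations lightest, flagging the conical-point analysis as the one technical point requiring the $1$-dimensional classification and cone rigidity.
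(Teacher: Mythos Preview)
The paper does not supply its own proof of Theorem \ref{thm:RCD02=Alex}; the result is quoted as a black box from \cite{LytchakStadler_Ricci_to_Alex} and used without argument, so there is no in-paper proof to compare your proposal against.

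Assessed on its own, your first route has a genuine gap at the step where you claim that away from the singular set $X$ is a \emph{smooth} Riemannian surface with Gaussian curvature $\ge 0$. For a general non-collapsed $\RCD$ space the regular set is only known to carry bi-H\"older (or at best bi-Lipschitz) charts to $\R^N$, not a $C^2$ Riemannian structure; and Bishop--Gromov volume comparison by itself does not yield a pointwise sectional curvature lower bound even when a smooth structure is present. The remark that ``a weighted surface with $N=2$ forces the weight to be constant'' already presupposes a smooth decomposition of metric and weight, which is exactly what is unavailable a priori. Your local analysis at conical points and the appeal to Toponogov globalization are fine once the local comparison is in hand, but the substantive difficulty is precisely establishing the local Alexandrov comparison at \emph{regular} points without first knowing smoothness --- not the treatment of the isolated cone points.

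Your ``alternative route'' --- arguing directly that in dimension $2$ the $\CD(0,2)$ transport-convexity condition is equivalent to the Alexandrov triangle comparison, since Ricci and sectional curvature coincide --- is much closer in spirit to what \cite{LytchakStadler_Ricci_to_Alex} actually does, and it is an honest description of where the content lies: one must convert an optimal-transport inequality into a metric comparison inequality without ever passing through a smooth chart. That conversion is the theorem, and it is not short; if you wish to provide a proof rather than a citation, that is the step requiring real work.
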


 The next classification theorem essentially concerns $2$-dimensional Alexandrov spaces without boundary. The result is well-known to Alexandrov geometry community but seems not stated in the literature. 

\begin{lemma}\label{lem:classification2D}
Let $(X,\dist,\haus^n)$ be an $n$-dimensional Ricci limit space, if $X$ splits isometrically as $X=\R^{n-2}\times Y$, with $Y$ being compact, then $(Y,\dist_Y)$ is isometric to one of the following spaces equipped with a metric of non-negative curvature in the sense of Alexandrov space with homeomorphism type as $(1)$ a Klein bottle $\mb{K}$; $(2)$ a torus $\mb{T}^2$; $(3)$ a real projective space $\R P^2$; $(4)$ a sphere $\mb{S}^2$.
\end{lemma}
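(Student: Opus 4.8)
The plan is to combine Theorem \ref{thm:RCD02=Alex} with the classification of closed surfaces and the fact that non-collapsed Ricci limit spaces have empty boundary. First I would invoke the splitting theorem (Theorem \ref{thm:RCDsplitting}), applied iteratively $n-2$ times to the line factors, together with the fact that the reference measure on $X$ is $\haus^n$: this shows that $(Y,\dist_Y,\haus^2)$ is a non-collapsed $\RCD(0,2)$ space, where $\dist_Y$ satisfies $\dist^2 = \dist_Y^2 + \dist_{\R^{n-2}}^2$. By Theorem \ref{thm:RCD02=Alex}, $Y$ is therefore a $2$-dimensional Alexandrov space of non-negative curvature, and by hypothesis it is compact.

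Next I would rule out boundary. Since $X = \R^{n-2}\times Y$ is a non-collapsed Ricci limit space, it has empty De~Philippis--Gigli boundary, i.e.\ $\mc{S}^{n-1}(X)\setminus \mc{S}^{n-2}(X) = \emptyset$, by \cite{Cheeger-Colding97I}*{Theorem 6.2}. Using the product structure of tangent cones (a tangent cone of $X$ at $(x',y)$ splits as $\R^{n-2}$ times a tangent cone of $Y$ at $y$), a boundary point of $Y$ in the Alexandrov sense would produce a point of $X$ whose tangent cone is $\R^{n-2}\times C(\mb S^1_{\le \pi})$ with the half-line direction, hence a point in $\mc{S}^{n-1}(X)\setminus\mc{S}^{n-2}(X)$; this is the same mechanism as in the proof of Lemma \ref{lem:classification1D}. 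Equivalently, one may cite \cite{BNS20}*{Theorem 7.8} identifying the Alexandrov boundary of $Y$ with its $\RCD$-boundary and then the product formula for $\RCD$-boundaries. Either way, $Y$ is a compact $2$-dimensional Alexandrov space of non-negative curvature \emph{without boundary}.

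Finally I would classify such $Y$ topologically. A compact $2$-dimensional Alexandrov space without boundary is a closed topological surface (this is classical: away from finitely many conical singular points it is a smooth surface, and a cone over a circle of length $< 2\pi$ is homeomorphic to a disc, so $Y$ is a topological manifold). By the classification of closed surfaces, $Y$ is homeomorphic to $\mb S^2$, $\R P^2$, $\mb T^2$, or $\mb K$ — the orientable genus-$g$ surfaces with $g\ge 2$ and the non-orientable ones of higher genus are excluded because they carry no metric of non-negative curvature, by Gauss--Bonnet applied to the smooth part together with the non-negative contribution of the cone points (each cone point contributes a non-negative atom to the curvature measure, so the total curvature is $2\pi\chi(Y) \ge 0$, forcing $\chi(Y)\in\{2,1,0\}$). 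This gives exactly the four homeomorphism types in the statement, each equipped with a non-negatively curved Alexandrov metric, as claimed.

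\textbf{Main obstacle.} The step I expect to require the most care is the boundary exclusion: making precise that a boundary point of the Alexandrov surface $Y$ forces $X$ to have non-empty De~Philippis--Gigli boundary, which requires either the product behavior of the singular strata $\mc{S}^k$ under isometric products or the identification of the two boundary notions from \cite{BNS20}. The topological classification itself is standard, and the Gauss--Bonnet obstruction to higher-genus surfaces is routine once one knows $Y$ is a closed surface with a non-negatively curved Alexandrov metric.
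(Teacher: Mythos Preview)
Your proposal is correct. The setup (splitting to get a non-collapsed $\RCD(0,2)$ factor, invoking Theorem~\ref{thm:RCD02=Alex}, and excluding boundary via the empty-boundary property of non-collapsed Ricci limit spaces) matches the paper's argument; your boundary discussion is in fact more detailed than what the paper writes, which simply asserts ``since $X$ has no boundary, $Y$ also has no boundary.''

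The genuine difference is in the final topological classification. You argue via Gauss--Bonnet for Alexandrov surfaces: the curvature measure is non-negative, so $2\pi\chi(Y)\ge 0$, forcing $\chi(Y)\in\{0,1,2\}$ and hence the four homeomorphism types. The paper instead passes to the universal cover $\tilde Y$ and trichotomizes on how many $\R$-factors $\tilde Y$ splits off: if $\tilde Y=\R^2$ then $Y$ is flat, hence $\mb T^2$ or $\mb K$; splitting exactly one $\R$ is ruled out by Lemma~\ref{lem:classification1D}; and if $\tilde Y$ does not split it is a simply connected closed surface, hence $\mb S^2$, so $Y$ is $\mb S^2$ or $\R P^2$. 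Your route is more elementary and self-contained once one accepts Gauss--Bonnet for singular surfaces. The paper's route stays entirely inside the $\RCD$/splitting toolkit already in play and, as a byproduct, yields the stronger rigidity that in the $\mb T^2$ and $\mb K$ cases the metric is actually flat (a fact the paper later re-derives from \cite{MondinoWeiUni} when proving Theorem~\ref{thm:limit_n-2}\eqref{main:item2}).
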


\begin{proof}
First observe that same as the proof of Lemma \ref{lem:classification1D}, it follows from Theorem \ref{thm:RCDsplitting} that $Y$ is a non-collapsed $\RCD(0,2)$ space. Moreover, since $X$ has no boundary, $Y$ also has no boundary. 

The statement about metric is clear due to Theorem \ref{thm:RCD02=Alex}. It suffices to consider the topological type. It is well-known that a compact $2$-dimensional Alexandrov space without boundary is topologically a closed $2$-dimensional surface. Consider the universal cover $\tilde Y$ of $Y$, if $\tilde Y$ splits off $\R^2$, then $\tilde Y$ is isometric to $\R^2$, so $Y$ is locally isometric to $\R^2$, thus a flat, closed $2$-dimensional manifold, the only possible ones are $\mb{T}^2$ or $\mb K$. If $\tilde Y$ splits off $\R$ but not $\R^2$, this is impossible by the classification Lemma \ref{lem:classification1D}. If $\tilde Y$ does not split, then $\tilde Y$ is a simply connected closed surface so it must be $\mb S^2$,  then $Y$ is $\mb S^2$ or $\R P^2$.

\end{proof}

The following lemma says that one can always find a non-collapsed limit at infinity in a $2$-dimensional Alexandrov space without boundary. We mainly use the gradient flow of a Busemann function in an Alexandrov space, so we refer to Petrunin \cite{Petrunin_semi} for the theory of gradient flow of semiconcave functions in Alexandrov spaces.

\begin{lemma}\label{lem:nc_at_infinity}
Let $(X,\dist,\haus^2)$ be a non-compact non-collapsed $\RCD(0,2)$ space without boundary, then there exists a sequence of points $p_i\to \infty$ so that $(X,\dist,p_i)$ pGH converges to either $\R^2$ or $\R^1\times \mb{S}^1$.
\end{lemma}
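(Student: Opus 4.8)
The plan is to pass to a pointed limit of $X$ along a ray --- which, following the authors' suggestion, can be organised via the gradient flow of a Busemann function --- observe that this limit splits off a line, and then classify its one-dimensional factor.

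By Theorem~\ref{thm:RCD02=Alex}, $X$ is a complete, non-compact, $2$-dimensional Alexandrov space of non-negative curvature with empty boundary. Since $X$ is proper, geodesic and non-compact, fix $p_0\in X$ and a ray $\gamma\colon[0,\infty)\to X$ from $p_0$; its Busemann function $b=b_\gamma$, $b(x)=\lim_{t\to\infty}(\dist(x,\gamma(t))-t)$, is concave and $1$-Lipschitz by non-negative curvature, and Petrunin's gradient-flow calculus for semiconcave functions \cite{Petrunin_semi} applies to it. For the argument it suffices to take the basepoints $p_i:=\gamma(i)\to\infty$. The key point --- discussed below --- is that $\inf_i\haus^2(B(p_i,1))>0$; granting this, Theorem~\ref{thm:pGH} (with $K=0$, $N=2$) yields, after passing to a subsequence, a {\rm pmGH} limit $(X_\infty,\dist_\infty,\haus^2,p_\infty)$ which is again a non-collapsed $\RCD(0,2)$ space, hence a $2$-dimensional Alexandrov space of non-negative curvature with empty boundary (the latter because the absence of boundary passes to non-collapsed {\rm pmGH} limits, cf.\ \cite{BNS20}).

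Next, $X_\infty$ contains a line. Indeed, for each fixed $T>0$ and $i\ge T$ the curve $s\mapsto\gamma(i+s)$, $s\in[-T,T]$, is a unit-speed minimizing geodesic of length $2T$ through $p_i$; by Arzel\`a--Ascoli inside the {\rm pmGH} convergence (passing to further subsequences, then diagonalising over $T\to\infty$) these subconverge to a line $\sigma\colon\R\to X_\infty$ through $p_\infty$. By the splitting theorem for $\RCD(0,2)$ spaces (Theorem~\ref{thm:RCDsplitting}) we get a measure-preserving isometry $X_\infty\cong\R\times Y'$ with $(Y',\dist_{Y'},\haus^1)$ a non-collapsed $\RCD(0,1)$ space, hence --- by the classification of $1$-dimensional $\RCD$ spaces \cite{KL15} recalled in the proof of Lemma~\ref{lem:classification1D} --- isometric to $\R$, to $\mb S^1$, or to an interval $[0,b]$ with $b\in(0,\infty]$. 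The interval is impossible: then $X_\infty=\R\times[0,b]$ would have non-empty boundary, contradicting the previous paragraph. Hence $X_\infty$ is isometric to $\R^2$ or to $\R\times\mb S^1$, as claimed.

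The main obstacle is the non-collapsing input $\inf_i\haus^2(B(p_i,1))>0$ (and the closely related fact that $X_\infty$ has no boundary): this is precisely where one must use that $X$ is a genuine non-negatively curved surface, not merely a $2$-dimensional metric measure space. If the asymptotic volume ratio of $X$ is positive it follows at once from Bishop--Gromov. In general it should follow from the structure theory of complete open surfaces of non-negative curvature --- the Cohn--Vossen bound $\int_X K\le 2\pi$ (so that for every $\varepsilon>0$ there are only finitely many points of angle defect $\ge\varepsilon$) together with the fact that an end of such a surface cannot become arbitrarily thin --- which forces a uniform positive lower bound for the areas of unit balls centred at points escaping to infinity. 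The subtle case to keep in mind is that of plane-topology surfaces whose total curvature equals $2\pi$ (equivalently, whose asymptotic volume ratio is $0$), such as paraboloids (limit at infinity $\R^2$) or capped thin half-cylinders (limit at infinity $\R\times\mb S^1$), where Bishop--Gromov gives nothing and non-negative curvature must be invoked directly.
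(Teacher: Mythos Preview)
Your overall strategy---take basepoints along a ray, pass to a limit, find a line, split, classify the $1$-dimensional factor---matches the paper's, and you correctly isolate the crux: ruling out collapse of the sequence $(X,\dist,p_i)$. However, this is precisely where your argument has a genuine gap. You assert that $\inf_i\haus^2(B(p_i,1))>0$ ``should follow'' from Cohn--Vossen-type considerations and the heuristic that an end cannot become arbitrarily thin, but you give no proof. The intuition is right, yet making it rigorous in the Alexandrov setting is essentially the whole content of the lemma; appealing to unproved structure theory is not enough.

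The paper does \emph{not} try to establish non-collapsing a priori. Instead it argues by contradiction: if some subsequential limit were collapsed, then after splitting off the line it would be isometric to $\R$ (the factor $K$ is a point; the interval is excluded by boundary stability as you note). The recentred Busemann function $b_\gamma-t_i$ then converges locally uniformly to the identity on $\R$. The contradiction is extracted from two properties of the Busemann level sets in $X$, both proved using the gradient flow of (truncations of) $b_\gamma$ on the Alexandrov space $X$ via \cite{Petrunin_semi}:
\begin{enumerate}
    \item[(L.1)] each level set $\{b_\gamma=s\}$ is path-connected by Lipschitz curves;
    \item[(L.2)] $s\mapsto\haus^1(\{b_\gamma=s\})$ is non-decreasing (obtained by exhibiting a $1$-Lipschitz surjection between level sets via the gradient flow).
\end{enumerate}
From (L.2) one gets a uniform lower bound $\mr{diam}(\{b_\gamma=t_i\})\ge 2c_0>0$, and then from (L.1) and the intermediate value theorem a point $q_i\in\{b_\gamma=t_i\}$ with $\dist(p_i,q_i)=c_0$. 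But $b_\gamma(q_i)-t_i=b_\gamma(p_i)-t_i=0$, so $p_i$ and $q_i$ converge to the same point of $\R$, contradicting $\dist(p_i,q_i)=c_0>0$.

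Note that (L.2) is exactly the rigorous version of your ``ends cannot become arbitrarily thin'': the gradient flow of the Busemann function, which you mention only as an organisational device, is in fact the substance of the missing step. If you want to salvage your direct route, you would need to prove (L.2) (or an equivalent uniform area lower bound) before invoking Theorem~\ref{thm:pGH}; the paper's remark following the lemma indicates this can be done (citing \cite{antonelli2023isoperimetric}), but it is not for free.
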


\begin{proof}
Since $X$ is non-compact, there exists a unit speed ray $\gamma(t):[0,\infty)\to X$ emanating from some point $p\in X$, i.e. $\gamma(0)=p$. Let 
\begin{equation}
    b_{\gamma}(x)\defeq \lim_{t\to \infty}t-\dist(x,\gamma(t)),
\end{equation}
be the associated Busemann function. Note that $X$ is an Alexandrov space with non-negative curvature due to Theorem \ref{thm:RCD02=Alex}. By triangle comparison, $b_{\gamma}(x)$ is convex. 

Take $p_i\defeq\gamma(t_i)$ for some sequence of positive numbers $t_i\nearrow \infty$. We claim that any limit space $(Y,y)$ of $(X,p_i)$ must be either $\R^2$ or $\R\times\mb S^1$. First notice that if we reparametrize $\gamma$ as $\tilde\gamma:[-t_i,\infty)$, so that $\tilde \gamma(0)=p_i=\gamma(t_i)$, then we see that $\tilde \gamma$, hence $\gamma$, locally uniformly converges to a line in $Y$ along $(X,p_i)\to (Y,y)$. By the splitting theorem for $\RCD$ spaces, Theorem \ref{thm:RCDsplitting}, $Y$ splits off an $\R$-factor. 

We argue by contradiction. Write $Y=\R\times K$, where $K$ is a $\RCD(0,1)$ space. If $Y$ is not $\R^2$ or $\R\times\mb S^1$, by the classification theorem of $\RCD(0,1)$ spaces, $K$ can only be a single point or an interval. By the stability of vanishing of boundary \cite{BNS20}*{Theorem 1.6}, $K$ cannot be an interval since $X$ does not have boundary. So $K$ must be a single point and $Y$ is isometric to $\R$ with Euclidean metric. Assume that $(X,p_i)$ pGH converges to $(\R,0)$. Consider the function $b_\gamma-t_i$. It is a locally bounded $1$-Lipschitz function, so by Arzela-Ascoli theorem passing to a subsequence it locally uniformly converges to a $1$-lipschitz function $b$ on $(Y,y)=(\R,0)$, see for example \cite{MN14}*{Proposition 2.12}. Notice that for any $a\in \R$, $b_\gamma(\gamma(t_i+a))-t_i=a$. Combining this with the local uniform convergence $\gamma\to \R$ along $(X,p_i)\to (\R,0)$, we infer that $b$ is the identity function on $\R$. 

For the sake of contradiction, we aim to prove that 
\begin{enumerate}[label=(\textbf{L.\arabic*})]
    \item \label{eq:connected} For any $s\in \R$, $\{b_\gamma=s\}$ is path connected by Lipschitz curves.
    \item \label{eq:nondecreasing} The function $s\mapsto \haus^1(\{b_{\gamma}= s\})$ is non-decreasing as $s$ increases.
\end{enumerate}
If \ref{eq:connected} and \ref{eq:nondecreasing} are proved, then we see from \ref{eq:nondecreasing} that there exists $c_0>0$,  so that $\mr{diam}(\{b_\gamma=t_i\})\ge 2c_0>0$. If this is not true then $\liminf_{i\to\infty}\mr{diam}(\{b_\gamma=t_i\})=0$, which in turn implies $\liminf_{i\to\infty}\haus^1_\infty(\{b_\gamma=t_i\})\to 0$. It follows $\liminf_{i\to\infty}\haus^1(\{b_\gamma=t_i\})\to 0$. This is a contradiction to $\haus^1(\{b_\gamma=t_i\})\ge \haus^1(\{b_\gamma=0\})>0$. 
We then see from \ref{eq:connected} and the intermediate value theorem that there exists $q_i\in \{b_\gamma=t_i\}$ so that $\dist_g(p_i,q_i)= c_0$. We already know $p_i\to 0$ along $(X,p_i)\to (Y,0)$. Assume that $q_i\to a\in \R$, this is possible because $q_i$ has uniformly bounded distance to $p_i$. However $0=b_\gamma(p_i)-t_i=b_\gamma(q_i)-t_i\to b(a)=a$, which means $p_i,q_i$ converges to the same point in $\R$, this is a contradiction to $\dist_g(p_i,q_i)=c_0>0$. 

We now prove \ref{eq:connected} and \ref{eq:nondecreasing} by the gradient flow of some truncation of $b_\gamma$.

For \ref{eq:connected}. Let $s\in \R$ and $\sigma$ be a geodesic joining two points on $\{b_\gamma=s\}$. By the convexity of $b_\gamma$, $\sigma\subset \{b_\gamma\le s\}$. Set $\bar b_\gamma=\min\{ -s, -b_\gamma\}$, it is a minimum of concave functions hence also concave, and $\sigma\subset\{\bar b_\gamma\ge -s\}$. Denote the gradient flow of $\bar b_\gamma$ by $G_t:X\to X$ for $t\in[0,\infty)$, whose existence is guaranteed by \cite{Petrunin_semi}*{Proposition 2.1.2}. Since $\sigma$ is compact, and by construction $G_t$ fixes $\{-b_\gamma\le -s\}$ for every $t\in[0,\infty)$, there exists $T>0$ so that $G_T(\sigma)\subset \{b_\gamma=-s\}$. so $G_T(\sigma)$ is the desired Lipschitz curve in the level set that joins the given two pints.

For \ref{eq:nondecreasing}. given two level sets $\{b_{\gamma}= s_1\}$ and $\{b_{\gamma}= s_2\}$ with $s_2>s_1$. To show $ \haus^1(\{b_{\gamma}= s_2\})\ge  \haus^1(\{b_{\gamma}= s_1\})$, it suffices to show that there exists a $1$-Lipschitz surjective map from $\{b_{\gamma}= s_2\}$ to $\{b_{\gamma}= s_1\}$, see also \cite{antonelli2023isoperimetric}*{Remark 3.3, item 3}. 

As before we set $\hat b_\gamma=\max\{ s_1, b_\gamma\}$ which is still convex. Consider the gradient flow $F_t:X\to X$ of $\nabla \hat b_{\gamma}$, defined for all $t\in [0,\infty)$. The convexity of $\hat b_\gamma$ implies that 
\begin{equation}
    \dist(F_t(x),F_t(y))\le \dist(x,y), \ t\in[0,\infty),
\end{equation}
which amounts to saying that $F_t$ is $1$-Lipschitz for any $t\in[0,\infty)$. By construction $F_t$ fixes $\{b_\gamma\le s_1\}$. Meanwhile since the Lipschitz constant of $b_\gamma$, $\mr{lip}~ b_\gamma=1$ everywhere on $X$, we see that $F_{s_2-s_1}:\{b_\gamma=s_2\}\to \{b_\gamma=s_1\}$ is the desired map. Indeed, we only need to verify that it is surjective. This follows from \cite{Petrunin_semi}*{Section 2.2, Property 3} (see also \cite{Fujioka_gradientflow}*{Lemma 6.1} for a statement with no assumption on time $t$ being small).
\end{proof}

\begin{remark}
In fact, in the setting of Lemma \ref{lem:nc_at_infinity}, a stronger statement is recently proved by Antonelli-Pozzetta in \cite{antonelli2023isoperimetric}*{Theorem 4.2} that for any $p_i\to \infty$, $(X,\dist, p_i)$ is a non-collapsing sequence. We can also give an alternative argument by running the same proof as that of lemma \ref{lem:nc_at_infinity}, provided that a non-compact Alexandrov space always splits off a line at the infinity, see \cite{antonelli2023isoperimetric}*{Lemma 2.29} and \cite{zhu2023twodimension}*{Corollary 4.3}. We give a sketch of proof as it is not needed.

Indeed, fix a base point $p_0$, choose a geodesic $\gamma_i$ joining $p_0$ and $p_i$, by passing to subsequence $\gamma_i$ locally uniformly converges to a ray $\gamma$. It is shown in \cite{antonelli2023isoperimetric}*{Lemma 2.29} that $\dist_g(p_i,\gamma)=o(\dist_g(p_0,p_i))$. Let $\bar p_i$ be a choice of closest point on $\gamma$ to $p_i$, then we see that $b_\gamma (p_i)\ge b_\gamma (\bar p_i)-\dist(\bar p_i,p_i)=\dist(\bar p_i,p_0)-\dist(p_i,\gamma)\ge \dist( p_i,p_0)-2\dist(p_i,\gamma) \to \infty$. If $(X,p_i)$ is collapsing, as shown before it must collapses to $(\R,0)$. Then we know $b_\gamma-b_\gamma(p_i)$ locally uniformly converges to the identity of $\R$. from \ref{eq:connected} and \ref{eq:nondecreasing} we infer again that there is $c_0>0$ and $q_i$ such that $\dist_g(p_i,q_i)=c_0$ and $b_\gamma(p_i)=b_\gamma(q_i)$, and this is enough for the contradiction. 

\end{remark}

\subsection{Torical band and cube inequality}\label{sec:torical_cube}
We recall some metric inequalities for positive scalar curvature on $2$ model spaces, the torical band and the cube. The authors believe that the cube inequality or torical band could find more applications in singular spaces.

Let $M^{n}$ be homeomorphic to $\mb T^{n-1} \times I$, where $I$ is an interval $[a,b]$, $a < b$. Such an $M$ is called a torical band. We define
$$
\partial M= \mb T^{n-1}\times \{b\} \bigcup \mb T^{n-1} \times \{a\} = : \partial_+ M \bigcup \partial_-M,
$$
and for any Riemannian metric $g$ on $M$, set
\begin{equation}
    \dist_g(\partial_+ M, \partial_-M) \defeq\inf \{ \dist_g(x,y):{x \in \partial_+ M, y \in \partial_- M }\}.
\end{equation}
Then, the following theorem holds.

\begin{theorem}[G-WXY \cites{Gromov_metric_inequality, Gromov_four_lectures,xiepsc, WXY_cubeinequality}\label{toricalband}]
If $(M,g)$ is a $n$-dimensional torical band with $Sc_g \geq n(n-1)$, then
\begin{equation}
     \dist_g(\partial_+ M, \partial_-M) <\frac{2\pi}{n}.
\end{equation}
\end{theorem}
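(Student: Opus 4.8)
# Proof Proposal for Theorem (Torical Band Inequality)

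The plan is to reduce the statement to a known index-theoretic or minimal-hypersurface obstruction by rescaling, and then to trace through the standard width estimate. First I would observe that the conclusion is scale-invariant in the appropriate sense: if $g$ has $\Sc_g \ge n(n-1)$ and width $w \defeq \dist_g(\partial_+M,\partial_-M)$, then for $\lambda>0$ the metric $\lambda^2 g$ has scalar curvature $\ge n(n-1)/\lambda^2$ and width $\lambda w$, so it suffices to prove that a torical band with $\Sc_g \ge \sigma$ has width $< \frac{2\pi}{n}\sqrt{n(n-1)/\sigma}$; equivalently one fixes the normalization $\Sc_g\ge n(n-1)$ and proves $w<\frac{2\pi}{n}$ directly. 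This is the form cited from Gromov and from Wang--Xie--Yu, so in the paper I would simply invoke \cite{Gromov_metric_inequality}, \cite{Gromov_four_lectures}, \cite{xiepsc}, \cite{WXY_cubeinequality}; but since the task is to sketch a self-contained argument, I continue.

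The core of the argument I would give is the warped-product / $\mu$-bubble technique. Suppose for contradiction that $w \ge \frac{2\pi}{n}$. Pick a smooth function $\varphi$ on the band, depending only on the $I$-coordinate after identifying $M$ with $\mb T^{n-1}\times[a,b]$ via a distance-like function $u$ to $\partial_-M$ (so $u$ is $1$-Lipschitz, $u=0$ on $\partial_-M$, $u\ge w$ on $\partial_+M$), with $\varphi \to +\infty$ near $\{u=0\}$ and $\varphi\to-\infty$ near $\{u=w\}$. One then minimizes the $\mu$-bubble functional
\begin{equation}
    \mathcal{A}(\Omega) = \mathrm{Per}(\Omega) - \int_M \big(\chi_\Omega - \chi_{\Omega_0}\big)\,\varphi\,d\vol_g
\end{equation}
among Caccioppoli sets $\Omega$ coinciding with a fixed $\Omega_0$ near $\partial M$. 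A minimizer $\Sigma = \partial^*\Omega$ exists and is a smooth hypersurface for $n-1\le 7$ (and the singular set has codimension $\ge 7$ in general, handled by the standard dimension reduction or by the recent Chodosh--Li--Mazurowski type arguments); it is separating and, because of the barrier behavior of $\varphi$, it is homologous to $\mb T^{n-1}\times\{\mathrm{pt}\}$, hence non-nullhomologous. The first and second variation of $\mathcal{A}$ give that $\Sigma$ has prescribed mean curvature $H_\Sigma = \varphi$ and that for all test functions $\psi$,
\begin{equation}
    \int_\Sigma |\nabla\psi|^2 \ge \int_\Sigma \Big(\tfrac12\Sc_g - \tfrac12\Sc_\Sigma + \tfrac12|A_\Sigma|^2 + \tfrac12\varphi^2 + \langle\nabla\varphi,\nu\rangle\Big)\psi^2 .
\end{equation}
Choosing $\psi$ and the profile $\varphi$ so that $-2\langle\nabla\varphi,\nu\rangle \ge \varphi^2 - \Sc_g + c$ forces, via $\Sc_g\ge n(n-1)$ and the ODE $\varphi' = \tfrac12\varphi^2 - \tfrac{n(n-1)}{2(n-1)}\cdot(\text{something})$ whose maximal existence interval has length exactly $\frac{2\pi}{n}$ (this is where $\tan$ and the constant $2\pi/n$ enter), a contradiction with the width being $\ge \frac{2\pi}{n}$: the comparison ODE must blow up before $u$ reaches $w$. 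Inductively this reduces the scalar-curvature statement on the $n$-band to a positivity-of-a-weighted-spectrum statement on the $(n-1)$-dimensional minimizer $\Sigma$, which is still a (homologically nontrivial) band-type object, and one closes the induction at the bottom dimension using that $\mb T^1=\mb S^1$ cannot carry the required warping over an interval of length $\ge 2\pi/n$ — or, in the index-theoretic route, using that $\mb T^{n-1}$ is enlargeable / has nonzero Rosenberg index, so the relevant Dirac operator on the band with Callias-type boundary conditions has nonzero index, contradicting the Lichnerowicz--Schrödinger--Lichnerowicz estimate once $\Sc_g$ times width$^2$ exceeds the threshold.

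The main obstacle is the sharp constant $\frac{2\pi}{n}$: getting merely \emph{some} universal bound on the width is soft (enlargeability of $\mb T^{n-1}$ plus a Lichnerowicz argument, or a crude $\mu$-bubble estimate), but extracting the precise value $\frac{2\pi}{n}$ requires the careful choice of the warping profile $\varphi$ so that the associated Riccati/Sturm comparison ODE $\varphi' + \tfrac{1}{n-1}\varphi^2 = -\tfrac{n}{2}$ (after dimensional bookkeeping) has its blow-up interval of length exactly $\frac{2\pi}{n}$, and propagating this optimally through the dimension reduction without losing a factor at each step. In the paper I would not reproduce this; I would cite Theorem~\ref{toricalband} as stated, since it is precisely the Gromov--Wang--Xie--Yu result, and only use it as a black box in the subsequent arguments.
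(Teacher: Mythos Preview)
The paper does not prove this theorem at all: it is stated with attribution to \cites{Gromov_metric_inequality, Gromov_four_lectures, xiepsc, WXY_cubeinequality} and used as a black box, exactly as you conclude in your final paragraph. So your proposal and the paper agree on the operative point --- cite it --- and there is nothing further to compare.

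Your sketch of the $\mu$-bubble argument is a reasonable outline of one of the known proofs, but the details as written are loose (the comparison ODE and its constants are garbled, and the inductive dimension reduction is only gestured at); since the paper never needs any of this, that imprecision is harmless here.
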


\begin{lemma}\label{lem:toricband}
    Let $n\in \N^+$, a toric band $\mb T^{n-1}\times I$ can be topologically embedded into $\R^n$. Equip $\R^n$ with standard Euclidean metric and restrict it to (the interior of) $\mb T^{n-1}\times I$, then by rescaling the metric, the length of the interval $I$ can be arbitrarily large.
\end{lemma}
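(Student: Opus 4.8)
The statement has two parts: first, a topological embedding $\mb{T}^{n-1} \times I \hookrightarrow \R^n$, and second, that after restricting the Euclidean metric and rescaling, the length of $I$ can be made arbitrarily large. For the embedding, the plan is to exhibit $\mb{T}^{n-1} \times I$ as (homeomorphic to) a closed tubular neighborhood of a standardly embedded torus. Concretely, embed $\mb{T}^{n-1}$ in $\R^n$ as the boundary of a ``thickened torus'' — for instance, take the standard torus of revolution for $n-1 = 2$ in $\R^3$, and inductively revolve to build $\mb{T}^{n-1} \subset \R^n$ as a hypersurface bounding a compact region. Such a hypersurface has a trivial normal bundle (it is two-sided in $\R^n$), so a tubular neighborhood is diffeomorphic to $\mb{T}^{n-1} \times (-\eps, \eps)$, and taking the closure of a slightly smaller neighborhood gives a topological (indeed smooth) embedding of $\mb{T}^{n-1} \times [-\eps/2, \eps/2] \cong \mb{T}^{n-1} \times I$ into $\R^n$. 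Alternatively, and perhaps cleanest: realize $\mb{T}^{n-1} \times I$ directly as the product of $n-1$ thin annuli times $I$ inside $\R^{2(n-1)} \times \R$, then observe this sits inside $\R^n$ after noting $\mb{T}^{n-1} = (\mb S^1)^{n-1}$ embeds in $\R^n$ — I would just cite that $(\mb S^1)^{n-1}$ embeds as a hypersurface in $\R^n$ (standard) and take a bicollar.

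\textbf{Second part: arbitrarily long interval.} Once we have a smooth embedding $\iota: \mb{T}^{n-1} \times I \hookrightarrow \R^n$, pull back the Euclidean metric to get a metric $h = \iota^* g_{\mathrm{eucl}}$ on $\mb{T}^{n-1} \times I$. The point is that $(\mb{T}^{n-1} \times I, h)$, after rescaling by a factor $\lambda > 0$ (i.e. replacing $h$ by $\lambda^2 h$, equivalently dilating the image $\iota(\mb{T}^{n-1} \times I)$ in $\R^n$ by $\lambda$), has $\dist_{\lambda^2 h}(\partial_+, \partial_-) = \lambda \cdot \dist_h(\partial_+, \partial_-)$. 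Since $\dist_h(\partial_+, \partial_-) > 0$ is some fixed positive number (the two boundary tori are disjoint compact sets in $\R^n$), letting $\lambda \to \infty$ makes this distance, which is essentially the ``length of $I$'' in the induced geometry, as large as we wish. One should phrase ``the length of the interval $I$'' as the distance between the two boundary components $\mb{T}^{n-1} \times \{a\}$ and $\mb{T}^{n-1} \times \{b\}$ in the induced (rescaled Euclidean) metric, since that is the quantity that matters when combining this lemma with Theorem \ref{toricalband} to derive a contradiction.

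\textbf{Main obstacle.} There is no deep obstacle here — the lemma is elementary. The only point requiring a little care is making sure the embedding of $\mb{T}^{n-1}$ as a hypersurface in $\R^n$ is genuinely there in all dimensions $n \geq 2$ (for $n=1$ the statement degenerates and should be read trivially, or one restricts to $n \geq 2$), and that its normal bundle is trivial so that a genuine product neighborhood $\mb{T}^{n-1} \times (-\eps, \eps)$ exists; this is automatic since a compact orientable hypersurface in $\R^n$ is two-sided. A secondary bookkeeping point is to state precisely which quantity ``length of $I$'' refers to after pulling back and rescaling — I would fix this to be $\dist(\partial_+ M, \partial_- M)$ in the induced metric, so that the lemma dovetails exactly with the hypothesis of Theorem \ref{toricalband}. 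I expect the write-up to be only a few lines: construct the bicollar, pull back, rescale, done.
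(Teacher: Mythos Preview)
Your proposal is correct and takes essentially the same approach as the paper: the paper embeds $\mb T^{n-1}$ in $\R^n$ as a torus of revolution (with nested radii $0<r_1<\cdots<r_{n-1}$) and takes a small tubular neighborhood of it, which is exactly your bicollar construction. Your write-up is in fact more complete than the paper's, since you spell out the rescaling step for the ``length of $I$'' (interpreted as $\dist(\partial_+,\partial_-)$), which the paper leaves implicit.
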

\begin{proof}
   Take a Clliford torus $\mb T^{n-1}\subset \R^n$ , i.e., a torus of revolution parametrized by a sequence of increasing radii $0<r_1<r_2<\ldots< r_{n-1}$. The $r_0$-tubular neighborhood of this $\mb T^{n-1}$ for $r_0= \frac{r_1}4$ is a possible topological embedding.
\end{proof}

\bigskip

Let $D^n$ be homeomorphic to $[-1,1]^n$ and $\partial_{i+},\partial_{i-}$ be a pair of opposite faces in $D$, $i=1,2,\ldots, n$. Then, we have 
\begin{theorem}[\cites{Gromov_metric_inequality, WXY_cubeinequality}]\label{thm:cubeineq}
If the cube $(D,g)$ satisfies $\Sc_g\ge 4\pi^2 n(n-1)$, then
\begin{equation}\label{eq:cubeineq}
    \min_i \dist_g (\partial_{i+},\partial_{i-}) < \frac{1}{\sqrt n}.
\end{equation}
\end{theorem}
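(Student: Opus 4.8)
The plan is to argue by contradiction, reducing to a band/width estimate that is already available. Assume $(D^n,g)\cong[-1,1]^n$ has $\Sc_g\ge 4\pi^2 n(n-1)$ but $\dist_g(\partial_{i+},\partial_{i-})\ge 1/\sqrt n$ for every $i$. The cleanest conceptual route is to deduce a contradiction from the torical band inequality, Theorem \ref{toricalband}: after rescaling $g$ by the constant factor $(2\pi)^2$ so that $\Sc\ge n(n-1)$ and every pair of opposite faces is at distance $\ge 2\pi/\sqrt n$, one converts the cube $D^n=[-1,1]^{n-1}\times[-1,1]$ into a torical band $\mb T^{n-1}\times I$ by Gromov's torical symmetrization of the $(n-1)$-dimensional cross-sections, combined with a doubling/bending near the side faces to make them admissible; the resulting band still has $\Sc\ge n(n-1)$ and two boundary tori at distance $\ge 2\pi/\sqrt n>2\pi/n$, contradicting Theorem \ref{toricalband}. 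Since this conversion is not literally metric-preserving, the gap between the crude constant $1/\sqrt n$ obtained this way and the conjecturally optimal $1/n$ reflects exactly the loss incurred in it.

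A more hands-on and self-contained route, and the one I would write out for $3\le n\le 8$, is an iterated warped $\mu$-bubble / dimension reduction, in the spirit of \cite{Gromov_four_lectures} and \cite{WXY_cubeinequality}. Single out one pair of faces, say $\partial_{1\pm}$; let $\rho$ be a $1$-Lipschitz smoothing of $\dist_g(\cdot,\partial_{1-})$, confined to a slab $W$ of width slightly less than $\dist_g(\partial_{1+},\partial_{1-})$ that separates the two faces, and pick a warping profile $\bar h$ on $W$ blowing up at its two ends. One then takes the $\mu$-bubble $\Sigma^{n-1}$ with prescribed mean curvature $\bar h(\rho)$, i.e.\ a minimizer of $\haus^{n-1}(\partial^*\Omega\cap\Int D)-\int_\Omega\bar h(\rho)\,d\haus^n$ over Caccioppoli sets trapped in $W$ with the natural corner boundary conditions on the faces $\partial_{i\pm}$, $i\ge 2$. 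Choosing $\bar h$ suitably, $\Sigma$ is a Riemannian cube $\cong[-1,1]^{n-1}$ carrying, after the standard conformal rearrangement, a metric with a controlled \emph{positive} scalar lower bound, whose transversal face-distances $\dist_\Sigma(\Sigma\cap\partial_{i+},\Sigma\cap\partial_{i-})$, $i\ge 2$, are still $\ge 1/\sqrt n$ (intrinsic distance on a submanifold dominates the ambient one). Iterating $n-2$ times lands on a Riemannian square with both face-distances $\ge 1/\sqrt n$ and a positive Gauss curvature lower bound; a final two-dimensional Gauss--Bonnet / warped-geodesic argument then bounds one of its face-distances strictly below $1/\sqrt n$, the desired contradiction, proving \eqref{eq:cubeineq}.

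The step I expect to be the main obstacle is the quantitative bookkeeping along the iteration: the warping profiles must be chosen so that, simultaneously, each slab width stays $<1/\sqrt n$ (so that the slab fits inside the corresponding face-distance), the induced scalar, respectively Gauss, curvature lower bound remains positive and large enough to run the final two-dimensional step after all $n-2$ reductions, and the transversal widths are not destroyed by the conformal change; reconciling these constraints is precisely what forces the non-sharp constant $1/\sqrt n$. A secondary, routine point is the regularity of the minimizing $\mu$-bubbles once $n-1\ge 8$: there one invokes the Schoen--Yau generalized (singular) minimal hypersurfaces, or — as in \cite{WXY_cubeinequality} and \cite{Gromov_metric_inequality} — replaces the variational argument entirely by a relative index computation for a Dirac operator with suitable boundary conditions on the cube with corners, which is dimension-free and yields the theorem as stated.
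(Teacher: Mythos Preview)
The paper does not prove this theorem at all: immediately after stating Theorem~\ref{thm:cubeineq} the authors write ``This is a special case of \cite{WXY_cubeinequality}*{Theorem 1.1}'' and move on. It is a black-box import from the cited references \cites{Gromov_metric_inequality, WXY_cubeinequality}, used later in the proof of Theorem~\ref{thm:gap}. So there is no ``paper's own proof'' to compare against; your proposal is attempting something the paper deliberately outsources.

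That said, your sketch is broadly in the right spirit for what the cited references actually do. The iterated $\mu$-bubble reduction you outline in the second paragraph is essentially the variational mechanism behind the cube inequality in low dimensions, and the Dirac/index-theoretic route you mention at the end is indeed how \cite{WXY_cubeinequality} handles all dimensions uniformly. Your first paragraph, however, is too loose to be a proof: ``torical symmetrization of the $(n-1)$-dimensional cross-sections, combined with a doubling/bending near the side faces'' is not a well-defined operation on a cube with corners that preserves a scalar curvature lower bound and the relevant distance, and the claimed loss from $1/n$ to $1/\sqrt n$ is not actually accounted for by anything concrete you wrote. If you want a self-contained argument, commit to the second route and do the bookkeeping you flag as the main obstacle; otherwise, do what the paper does and cite \cite{WXY_cubeinequality}*{Theorem 1.1}.
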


This is a special case of \cite{WXY_cubeinequality}*{Theorem 1.1}. In general, we can also consider manifolds that admit a nice map into a possibly lower dimensional cube. This leads us to Gromov's $\square^{n-m}$ inequality. The next theorem is a hugely simplified version of Gromov's $\square^{n-m}$ inequality proved in \cite{WXY_cubeinequality}*{Theorem 1.2}, which is sufficient for our use.

\begin{theorem}\label{thm:cube(n-m)}
    If $(X=\mb S^1\times \Pi_{i=1}^n [-R_i,R_i],\ g)$ is a $(n+1)$-manifold with corners and  $Sc_g(X)\ge k>0$, then
    \[
    \sum_{i=1}^n\frac{1}{ R_i^2}\ge \frac{kn}{\pi^2 (n-1)}
    \]
\end{theorem}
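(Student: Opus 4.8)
The plan is to obtain Theorem~\ref{thm:cube(n-m)} as the special case of the quantitative $\square^{n-m}$ inequality of \cite{WXY_cubeinequality}*{Theorem 1.2} (a quantitative form of Gromov's $\square^{n-m}$ inequality \cites{Gromov_metric_inequality,Gromov_four_lectures}) in which the target cube is $n$-dimensional and the fibre is a circle. The coordinate projection $\pi\colon X=\mb S^1\times\prod_{i=1}^n[-R_i,R_i]\to\prod_{i=1}^n[-R_i,R_i]$ is a trivial fibration over the cube, so it manifestly satisfies the combinatorial/degree hypothesis required to run that inequality, and its restriction to each face $\partial_{i\pm}$ of $X$ is again the corresponding coordinate projection; the only metric data entering the conclusion are $\inf_X\Sc_g\ge k$ together with the $g$-widths of the $n$ pairs of opposite faces, which the statement records through the parameters $R_i$. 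Feeding these into \cite{WXY_cubeinequality}*{Theorem 1.2}, with the $\mb S^1$-fibre playing the role of the $\square^{(n+1)-n}$ fibre, and tracking the dimensional constant, produces precisely $\sum_{i=1}^nR_i^{-2}\ge\frac{kn}{\pi^2(n-1)}$. As a consistency check, the same specialisation with a point fibre (so $\dim X=n$ equals the cube dimension) reproduces the cube inequality, Theorem~\ref{thm:cubeineq}.

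If one wants an argument using only the inequalities recalled in this subsection, a bound of the same shape but with a non-sharp constant follows from the cube inequality alone: pass to the $N$-fold cyclic cover of $X$ in the $\mb S^1$-direction, which preserves $\Sc\ge k$ and all the widths $\dist_g(\partial_{i+},\partial_{i-})$ for $i\le n$, then cut the circle open to obtain an $(n+1)$-dimensional cube in which the new pair of opposite faces is at $g$-distance growing linearly in $N$; applying the rescaled Theorem~\ref{thm:cubeineq} and letting $N\to\infty$ forces $\min_{i\le n}\dist_g(\partial_{i+},\partial_{i-})<2\pi\sqrt{n}/\sqrt{k}$, hence $\sum_{i=1}^nR_i^{-2}\ge c'(n)\,k$ for some $c'(n)>0$ (of order $1/n$, not $n$). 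To reach the sharp coefficient $\frac{n}{\pi^2(n-1)}$ one instead carries out the $\mu$-bubble dimension reduction of \cites{Gromov_Zhu_area,Chodosh_Li_Soapbubble,Gromov_four_lectures}, inducting on the cube dimension: assuming $\sum_{i=1}^nR_i^{-2}<\frac{kn}{\pi^2(n-1)}$, choose a $\tan$-type warping profile on the $n$-th interval, form the associated prescribed-mean-curvature $\mu$-bubble $\Sigma\subset X$ (which by a barrier argument stays away from the faces of $X$ and is again, up to homeomorphism, a circle times an $(n-1)$-cube), and combine the stability inequality for $\Sigma$ with the Schoen--Yau conformal rearrangement and $\Sc_g\ge k$ to equip $\Sigma$ with a metric of $\Sc\ge k-O(R_n^{-2})$; iterating down to the two-dimensional torical band $\mb S^1\times[-R_1',R_1']$ and quoting Theorem~\ref{toricalband} with $n=2$ gives a contradiction.

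The crux of the self-contained route is the exact accounting of constants rather than the geometry: at the step from cube dimension $j$ to $j-1$ the scalar-curvature lower bound drops by a quantity proportional to $R_j^{-2}$, but with a dimensional prefactor carrying the ratio $\frac{j}{j-1}$ coming from the coefficient $\frac{m-2}{m-1}$ in the second-variation inequality in ambient dimension $m=j+2$, and one must verify that, with the optimal warping profiles at the successive dimensions (solutions of the corresponding Riccati comparison on an interval of length $2R_j$ whose sharp eigenvalue is proportional to $R_j^{-2}$), these drops telescope to exactly the coefficient $\frac{\pi^2(n-1)}{n}$ of $\sum_iR_i^{-2}$ and not to anything larger. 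This bookkeeping is exactly what \cite{WXY_cubeinequality}*{Theorem 1.2} already performs, so in practice the cleanest proof is simply to quote it as in the first paragraph; the only remaining point is routine, namely smoothing the corners of $X$ (or using $\mu$-bubbles adapted to manifolds with corners) so that the cited theorem applies verbatim.
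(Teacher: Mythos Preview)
Your first paragraph is exactly the paper's proof: apply \cite{WXY_cubeinequality}*{Theorem 1.2} to the projection $X\to [-1,1]^n$ that kills the $\mb S^1$ factor (the paper also rescales the intervals to unit length, but this is cosmetic), noting that the hypotheses of that theorem are trivially satisfied. The remaining two paragraphs---the covering/cube-inequality route with a non-sharp constant, and the sketch of the $\mu$-bubble induction with constant bookkeeping---are extra commentary not present in the paper; they are reasonable but unnecessary, since the paper's proof is a two-line citation.
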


\begin{proof}
    We apply \cite{WXY_cubeinequality}*{Theorem 1.2} to the map $P: \mb S^1\times \Pi_{i=1}^n [-R_i,R_i]\to [-1,1]^n$ that kills the $\mb S^1$ factor by projection and rescales the intervals. The conditions listed in \cite{WXY_cubeinequality}*{Theorem 1.2} are trivially satisfied for $P$.
\end{proof}

Theorem \ref{toricalband} and \ref{thm:cubeineq} indicate that uniformly positive scalar curvature can control the size of manifolds and play a vital role in the proofs of Theorem \ref{thm:limit_n-2} and \ref{thm:gap}.

\section{Proofs of Main Theorems}\label{sec:proofs}
Let us start by proving theorem \ref{thm:limit_n-2}.
\begin{proof}




First, let us prove \eqref{main:item1} as follows. If $X$ splits $\R^{n-1}$, thanks to Lemma \ref{lem:classification1D}, we have $X=\mb{R}^n$ or $X=\mb{S}^1 \times \mb{R}^{n-1}$. In fact, $\mb S^1$ with non-collapsed $\RCD(0,1)$ structure is flat, so both $\mb{S}^1 \times \mb{R}^{n-1}$ and $\mb{R}^n$ are smooth Riemannian manifolds. By lemma \ref{lem:toricband}, we can find a large torical band $\mr{Bd} \defeq \mb T^{n-1} \times [0, 11] \subset \mb S^1 \times \mathbb{R}^{n-1}$ or $\mr{Bd} \subset \R^n$. 
There exists a radius $r>1$ satisfying 
\[ 
\mr{Bd} \subset B^X(p, r-1) \subset X. 
\] 
By Theorem \ref{lem:top}, there exist $\alpha$-bi-H\"older homeomorphisms $f_i: B(p_i, r) \rightarrow B(p, r+\epsilon_i)$ that are also $\epsilon_i$-GH approximations with $\epsilon_i \rightarrow 0$ and $d_g(f_i(p_i), p) \leq \epsilon_i$. We pull back the torical band $\mr{Bd}$ from the Ricci limit space to $(M_i,g_i,p_i)$, then $\mr{Bd}_i = f_i^{-1}(\mr{Bd}) \subset B(p_i, r)$ for sufficiently large $i$. After an arbitrarily small perturbation, we may assume that $\mr{Bd}_i$ is a smooth domain in $M_i$. Hence, we have constructed torical bands with restriction Riemannian metric $(\mr{Bd}_i,g_i)$ satisfying
$$
\dist_{g_i}(\partial_+\mr{Bd}_i, \partial _- \mr{Bd}_i)\ge 10, \ \Sc_{g_i} \geq 2.
$$
This contradicts the torical band estimate by Theorem \ref{toricalband} (or see \cites{Gromov_metric_inequality, Gromov_four_lectures,xiepsc, WXY_cubeinequality}), so we rule out the two cases $X=\mb{R}^n$ or $X=\mb{S}^1 \times \mb{R}^{n-1}$. This completes the proof of \eqref{main:item1}.

Next, let us prove \eqref{main:item2} as follows. Note that Lemma \ref{lem:nc_at_infinity} implies that  $Y$ is compact. Indeed, if $Y$ is not compact, by Lemma \ref{lem:nc_at_infinity} we can find points $p_i\in Y$ so that $(Y,\dist_Y, p_i)$ pGH converges to $\R^2$ or $\R\times\mb{S}^1$, which means by a diagonal argument we can re-choose a sequence $q_i\in M_i$ so that $(M_i, g_i, q_i)$ converges to $\R^n$ or $\R^{n-1}\times\mb{S}^1$, which  contradicts \eqref{main:item1} that has been proved above.

Now that $Y$ is compact, the classification in Lemma \ref{lem:classification2D} implies that $Y$ is one of $\mb{K}$, $\mb{T}^2$, $\R P^2$ or $\mb{S}^2$. To rule out $\mb{T}^2$, first notice that when equipped with a non-collapsed $\RCD(0,2)$ structure, the corresponding metric on $\mb{T}^2$ is necessarily flat hence smooth, see \cite{MondinoWeiUni}*{Corollary 1.4}. 
Then also notice that a large torical band $\mb{T}^{n-1}\times [0,11]$ can be embedded into $\mb{T}^2\times \R^{n-2}$, the proof goes exactly the same as in the proof of \eqref{main:item1}. For $\mb K$, note that $\mb{T}^2$ is the orientable double cover of $\mb{K}$, for large $i$ we can find $f_i:[0,11]\times \mb T^{n-3}\times\mb K\to M_i$ which is a homeomorphism onto its image and an $\epsilon_i$-GH approximation. Restrict the  Riemannian metric on $M_i$ to the image of $f_i$ and lift the image of $f_i$ to its orientable double cover, we find a metric of positive curvature lower bound $2$ on $[1,10]\times T^{n-1}$, which is impossible. 

Finally, let us prove (\ref{main:item3}). Suppose that $D=\text{diam}(\mathbb{S}^2, \dist_{\mathbb{S}^2})$ and $n,s \in \mathbb{S}^2$ such that $\dist(n,s)=D$. If we remove a small spherical cap 
 $B(n,\epsilon$) (resp. $B(s,\epsilon)$) centered at $n$ (resp.  $s$) of radius $\epsilon >0$, then we obtain that $\mathbb{S}^2- (B(n,\epsilon) \cup B(s,\epsilon))$ has topological type $\mathbb{S}^1 \times I $ and the distance between the boundary components $\partial B(n, \epsilon)$ and $\partial B(s, \epsilon)$ is $D-2\epsilon$. For any $R>0$, there exists an $\epsilon$-GH approximation $f_i: B^{M_i}(p_i,R)\to B^X(p,R+\epsilon)$.
\begin{itemize}
  \item 
  If (a) is satisfied, then it is valid to apply Theorem \ref{lem:top}. We get that $f_i$ is a homeomorphism, so when $R>D$, $f_i^{-1}([-R/2,R/2]\times \mathbb{S}^1 \times I )$ is still topologically a product of $\mb S^1$ and intervals, which we denote by $ \mb S^1_M\times I_M\times [-R/2,R/2]^{n-2}_M$. Without loss of generality, we may assume that $\mb S^1_M\times I_M\times [-R/2,R/2]^{n-2}_M$ is a smooth domain in $M$, then  
we deduce by Theorem \ref{thm:cube(n-m)},
\begin{equation}
    \frac{1}{(D-3\epsilon)^2}+\frac{n-2}{(2R-\epsilon)^2}>\frac n{2\pi^2(n-1)}. 
\end{equation}
Let $R\to \infty$ then let $\epsilon\to 0$, we derive that 
 \begin{equation}
     D\le \sqrt{\frac{2(n-1)}n}\pi
 \end{equation}
as desired.

 \item If (b) is satisfied, i.e.,  $3 \leq n \leq 8$.
 We let $N\defeq f^{-1}_i(\mathbb{S}^2- (B(n,\epsilon) \cup B(s, \epsilon)) \times [-R/2,R/2]^{n-2})$, where $\mathbb{S}^2- (B(n,\epsilon) \cup B(s, \epsilon)) \times [-R/2,R/2]^{n-2}$ is homeomorphic to $S^1\times I \times [-R/2,R/2]^{n-2})$. Without loss of generality, we may assume that $N$ is a smooth domain in $M$, and the distance among the opposites sides are $D-2\epsilon, R, \cdots, R$ since $f_i$ is an $\epsilon$-GH approximation. Here, the opposite sides refers to the pairs 
 \[
 \left\{(  f_i^{-1}(\partial B(n,\epsilon)), f_i^{-1}(\partial B(s,\epsilon) );\ \  ( f_i^{-1}(\partial_{j+}), f^{-1}_i(\partial_{j-}) ) \right\},
 \] 
where $\partial_{j+}, \partial_{j-}$ are faces of the cube $I\times [-R/2,R/2]^{n-2}$, $j=1,2,\ldots n-1$. Next, we will run the standard $\mu$-bubble techniques due to \cite{Gromov_four_lectures}*{Page 260} and \cite{Gromov_Zhu_area}*{Theorem 1.1}.  

 Without loss of generality, we may assume that $ f_i^{-1}(\partial B(n,\epsilon))\cup f_i^{-1}(\partial B(s,\epsilon)$ intersects $\partial N - (f_i^{-1}(\partial B(n,\epsilon)) \cup f_i^{-1}(\partial B(s,\epsilon)))$ in acute angle by a small perturbation. Now we construct  a smooth map
$$\varphi_1: N \rightarrow \left[-\frac{D-\epsilon}{2}, \frac{D-\epsilon}{2}\right]$$
such that $|d\varphi_1| \leq 1$ and $\varphi^{-1}_1( \frac{D-\epsilon}{2}) =f_i^{-1}(\partial B(n,\epsilon)) $ and $\varphi^{-1}_1(- \frac{D-\epsilon}{2}) =f_i^{-1}(\partial B(s,\epsilon))$. Now 
we define $$N_0 = \{x \in M: \varphi_1(x) <0 \}$$ and then consider the class
$$\mathcal{C} = \left\{\text{Caccioppoli sets } \Omega \text{ in } X \text{ such that } \Omega \Delta \Omega_0 \Subset N \setminus (f_i^{-1}(\partial B(n,\epsilon))\cup f_i^{-1}(\partial B(s,\epsilon)))\right\}.$$
Moreover, we set 
$$h_1: \left(-\frac{D-\epsilon}{2}, \frac{D-\epsilon}{2}\right) \rightarrow \mathbb{R},\ \  t \rightarrow - \frac{2(n-1)\pi}{n(D-\epsilon)}\tan\left(\frac{\pi t}{D-\epsilon}\right)$$
and define
$$\mu(\Omega) = \mathcal{H}^{n-1}(\partial^*\Omega \cap \text{int}(N)) - \int_N (\chi_\Omega - \chi_{N_0})h_1 \circ \varphi_1  d\mathcal{H}^n,$$
where $\partial^*\Omega$ is the reduced boundary of $\Omega$, $\text{int}(N)$ is the interior part of $N$ and $\chi_\Omega$ is the characteristic function of $\Omega$. Since $ 3 \leq n \leq 8$, there exists a smooth minimizer $\Omega_0$ of $\mu$ such that(see \cites{Gromov_Zhu_area}) 

\begin{enumerate}

    \item $Y_0 = \partial \Omega_0$ is a smooth embedded hypersurface with free boundary that separates $f_i^{-1}(\partial B(n,\epsilon)$ and $f_i^{-1}(\partial B(s,\epsilon)$.
    
    \item at least one connected component of $Y_0$ intersects with $f_i^{-1}(\partial_{1+}) \cup f_i^{-1}(\partial_{1-})$ and it satisfies $(1)$ above. Hence, we collect such components of $Y_0$ and still denote it by $Y_0$. Moreover, we denote $\partial_{1\pm}Y_0$ by the intersection of $\partial Y_0$ and $f^{-1}(\partial_{1\pm})$. It is clear that $\dist(\partial_{1+}Y_0,\partial_{1-}Y_0) \geq R$.

    \item the stability condition holds: 
    $$
    \qquad\qquad\int_{Y_0} (|\nabla_{Y_0}f|^2 - \frac{1}{2}(\Sc(N)- \Sc(Y_0) + |\AA|^2 + (\frac{n}{n-1}h_1^2 + 2h_1^\prime)\circ \varphi_1)f^2  \geq  \int_{\partial Y_0} A_{\partial N}(\nu, \nu)f^2.
    $$
    Here, $\nu$ is the unit outer normal vector field of $Y_0$ with respect to $\Omega_0$, $\AA$ denotes the trace-free part of the second fundamental form of $Y_0$ in $N$ and $A_{\partial N}$ is the second fundamental form of $\partial N$ with respect to the unit outer normal $\vec{n}$.
\end{enumerate}
Note that $$\frac{n}{n-1}h_1^2 + 2h^\prime_1 = \frac{4(n-1)\pi^2}{n(D-\epsilon)^2}.$$ The stability implies that there exists a positive function $u_0$ on $Y_0$ such that
$$-\Delta_{Y_0} u_0 - \frac{1}{2}(Sc(N) - Sc(Y_0) + |\AA|^2 - \frac{4(n-1)\pi^2}{n(D-\epsilon)^2})u_0 = \lambda_1 u_0 \geq 0, \text{ on } Y_0$$
and $$\frac{\partial u_0}{\partial \vec{n}} = A_{\partial N}(\nu,\nu)u_0,$$
where $\lambda_1$ is the corresponding first eigenvalue.

Next, we consider 
$$(N_1 = Y_0 \times \mathbb{S}^1,\  g_1 = g_{Y_0} + u_0^2 ds^2).$$
A direct calculation implies that for $y_0 \in Y_0,\  \theta \in \mathbb{S}^1$,
$$\Sc(N_1, (y_0, \theta)) = \Sc(Y_0, y_0) - 2u^{-1}_0\Delta_{Y_0}u_0 \geq \Sc(N, y_0) - \frac{4(n-1)\pi^2}{n(D-\epsilon)^2}.$$
Moreover, we construct a smooth function on $N_1$ as follows:
$$\varphi_2: Y_0 \rightarrow \left[-\frac{R}{2}, \frac{R}{2}\right]$$
such that $|d\varphi_2| \leq 1$ and $\varphi^{-1}_2( -\frac{R}{2}) =f_i^{-1}(\partial_{1-})$ and $\varphi^{-1}_2( \frac{R}{2}) =f_i^{-1}(\partial_{1_+} )$. Here, we may view $\varphi_1$ as an $\mathbb{S}^1$ invariant function on $N_1$. A similar argument implies there exists a smooth minimizer $\Omega_1$ for $\mu$-bubble functional and whose boundary $Y_1$ is a smooth embedded hypersurface with free boundary separating $f^{-1}{(\partial_{1+})}$ and $f^{-1}{(\partial_{1-})}$, 
then we consider
$$(N_2 = Y_1 \times T^2, \ g_2 = g_1 + u_1^2ds^2).$$
Here, $u_1$ the positive first eigenfunction of the stability condition of $Y_1$. 
The same direct calculation as above implies that
$$\Sc(N_2, (y_1, \theta))  \geq \Sc(N, y_1) - \frac{4(n-1)\pi^2}{n}\left(\frac{1}{(D-\epsilon)^2} + \frac{1}{R^2}\right)$$
for any $y_1 \in Y_1$ and $\theta \in T^2$. Inductively, we run this argument $n-1$ times to obtain that a smooth closed manifold
$$(N_{n-1} = \mathbb{S}^1 \times \mathbb{T}^{n-1}, \ g_{n-1} = g_{n-2} + u_{n-2}^2 ds^2 )$$
with
\[
\Sc_{g_{n-1}} \geq 2 - \frac{\pi^2(n-1)(n-2)}{n R^2} - \frac{4\pi^2(n-1)}{n(D-2\epsilon)^2},
\] 
Note that $N_{n-1}$ admits no metric with positive scalar curvature, we conclude that
 \[2 - \frac{\pi^2(n-1)(n-2)}{n R^2} - \frac{4\pi^2(n-1)}{n(D-2\epsilon)^2} \leq 0.\]
 Let $R \rightarrow \infty$ and $\epsilon \rightarrow 0$. we derive
 \[D \leq \sqrt{\frac{2(n-1)}{n}}\pi.\]


\end{itemize}

\end{proof}
\begin{remark}
    We provide an alternative proof of item \eqref{main:item1} and \eqref{main:item2} of Theorem \ref{thm:limit_n-2} using the almost splitting map (see \cite{BNS20}*{Definition 3.1} for the definition) that is suggested by the referee. Again, if $X$ is isometric to either of $\R^n$, $\R^{n-1}\times \mb S^1$, $\R^{n-2}\times\mb T^2$ or $\R^{n-2}\times K$, then $\mc R(X)=X$, since $\mb T^2$ or $K$ with $\RCD(0,2)$ structure must be flat. Without loss of generality, we can assume $B_2(p_i)\subset M_i$ pGH converges to $B_2(0)\subset \R^n$ thank to the flatness of $\mb S^1$, $\mb T^2$ and $K$. Fix $\delta>0$ to be determined, by \cite{BNS20}*{Theorem 3.5}, there exists a large $i$ depending on $\delta$, so that there exists a harmonic map $(u_1,\ldots, u_n): B_2(p)\to \R^n$. Here we denote $p\defeq p_i$ for simplicity. Moreover there exists a constant $c(n)>0$ depending only on $n$, so that for any $a,b=1,\ldots, n$ we have
\begin{enumerate}[label=(\textbf{S.\arabic*})]
    \item\label{item:GradientEsti} $\sup_{x\in B_1(p)}|\nabla u_a|\le 1+c(n)\sqrt{\delta}$. See \cite{BNS20}*{Remark 3.3}.
    \item $4\dashint_{B_2(p)} |\hess_{u_a}|^2 d\vol_g<\delta$.
    \item $\dashint_{B_2(p)}|\nabla u_a\cdot \nabla u_b-\delta_{a,b}|d\vol_g<\delta$.
\end{enumerate}
We allow $c(n)$ to change from line to line but it still only depends on $n$. Let $\phi\in C_c^\infty(M_i)$ be a cut-off function so that $\phi\in[0,1]$ on $M_i$, $\phi=1$ on $B_1(p)$ and $\phi=0$ on $M_1\setminus B_2(p)$. This $\phi$ satisfies $|\nabla \phi|+|\Delta \phi|<c(n)$ for some $c(n)>0$. Then by Bochner formula and the harmonicity of $u_a$, $a=1,\ldots,n$, we can estimate
\begin{align}\label{eq:RicEsi}
    \int_{B_1(p)}\Ric(\nabla u_a,\nabla u_b)d\vol_g&\le \int_{B_2(p)}\frac12|\nabla u_a\cdot\nabla u_b-\delta_{a,b}||\Delta \phi|+\phi|\hess_{u_a}||\hess_{u_b}|d \vol_g\\
    &\le \frac{c(n)}2\int_{B_2(p)} |\nabla u_a\cdot\nabla u_b-\delta_{a,b}|+|\hess_{u_a}|^2+|\hess_{u_b}|^2d\vol_g\notag\\
    &\le c(n)\vol_g(B_2(p)) \delta\le c(n) \vol_g(B_1(p)) \delta.\notag
\end{align}
We have used the volume comparison inequality in the last inequality and the constant $2^n$ we get from the inequality is absorbed in $c(n)$. 
Consider the set 
\begin{equation*}
    R_{a,b}\defeq\left\{x\in B_1(p): \sup_{r\in(0,2)}\dashint_{B_r(x)\cap B_1(p)}|\Ric(\nabla u_a,\nabla u_b)|d\vol_g>\sqrt{\delta}\right\}, 
\end{equation*}
for $a,b=1\ldots,n$. The weak $(1,1)$-estimate of maximal functions gives 
$$
\vol_g(R_{a,b})\le {c(n)}\sqrt\delta \vol_g(B_1(p)). 
$$ 
Choosing $\delta>0$ small, we see that 
\begin{align*}
    \vol_g\left(B_1(p)\setminus\bigcap_{a,b=1}^{n}(B_1(p)\setminus R_{a,b})\right)&\le \sum_{a,b=1}^n \vol_g( R_{a,b})\\
    &\le n^2c(n)\sqrt{\delta}\vol_g(B_1(p))\\
    &<\vol_g(B_1(p)).
\end{align*}
It follows by the above estimate and the Lebesgue differentiation Theorem that for every Lebesgue point $x\in \bigcap_{a,b=1}^{n}(B_1(p)\setminus R_{a,b})$, it holds 
\begin{equation}\label{eq:RicSmall}
    \Ric(\nabla u_a,\nabla u_b)(x)\le \sqrt{\delta},\ a,b=1,\ldots, n.
\end{equation}

Meanwhile, applying the transformation theorem as in \cite{BMS23}*{Proposition 4.3} (and its proof) to $\{u_a\}_{a=1}^n$, we get the following. There exists a Borel set $E$ so that $\vol_g(B_1(p)\setminus E)\le c(n)\sqrt \delta$. Denote $u\defeq (u_1,\ldots, u_n)$, for each $x\in E$, there exists a matrix $A_x$ with $|A^x-I|\le c(n)\sqrt{\delta}$ such that $u^x\defeq A^x\circ u$ satisfies
\begin{equation}\label{eq:ExactOrtho}
    \nabla u^x_a(x)\cdot \nabla u^x_b(x)=\delta_{a,b}.
\end{equation}
By the volume estimates for $R_{a,b}$ and $E$, there exists a Lebesgue point $x\in \bigcap_{a,b=1}^{n}(B_1(p)\setminus R_{a,b})\cap E$. This $x$ satisfies both \eqref{eq:RicSmall} and \eqref{eq:ExactOrtho}.

Take $v\in T_x M_i$ with $|v|=1$.
Incorporating the gradient estimates \ref{item:GradientEsti}, we can estimate the Ricci tensor evaluated at $x$. For the computation we denote the matrix elements as $A^x=(A^x_{a,b})_{a,b=1}^n$.
\begin{align*}
    \Ric(v,v)&= \sum_{a,b=1}^n (v\cdot \nabla u^x_a)(v\cdot \nabla u^x_b)\Ric(\nabla u^x_a,\nabla u^x_b) \\
    &\le \sum_{a,b=1}^n \sum_{c,d=1}^n  (A^x_{a,c}A^x_{b,d})^2(v\cdot \nabla u_c)(v\cdot \nabla u_d)\Ric(\nabla u_c,\nabla u_d)\\
    &\le c(n)(1+c(n)\sqrt\delta)^2\cdot \sqrt{\delta}\\
    &\le c(n)\sqrt{\delta}.
\end{align*}
 It follows that $\Sc(x)\le n  c(n)\sqrt{\delta}<2$ by choosing $\delta>0$ small enough, this is a contradiction. The same consideration also works for Theorem \ref{thm:gap}.
\end{remark}

\begin{proof}[Proof of Corollary \ref{cor:betti}]

We argue by contradiction. Suppose $b_1(X)\ge n-1$. 

When $X$ is compact, for large $i$, $M_i$ is also compact with uniform diameter upper bound. Thanks to the splitting theorem and \cite{Wang_RicciSemilocal}, the universal cover $\tilde X$ of $X$ splits off $\R^{n-1}$, and $\tilde X=\R^{n}$. Let $\tilde M_i$ be the universal cover of $M_i$ and $\tilde p_i$ be a lifting of $p_i$. By the Pan-Wang description of convergence of the sequence $\tilde M_i$ \cite{PanWang_universal}*{Theorem 1.5}, we obtain that there exists a Ricci limit space $(Y,y,G)$ as the equivariant GH limit of $(\tilde M_i, \tilde p_i, \pi_1(M_i))$, where $G$
is a closed subgroup of $\mr{Isom}(Y)$, and there exists a subgroup $H$ of $G$ such that $Y/H= \tilde X=\R^n$, so $Y=\R^n$ with standard Euclidean metric. In particular, a long torical band $[0,11]\times\mb T^{n-1}$ can be embedded into $\R^n$, so we get a contradiction as in the proof of \eqref{main:item1} of Theorem \ref{thm:limit_n-2}.

When $X$ is non-compact. It is proved by Ye \cite{Ye_bettiRCD}*{Theorem 2} (after Anderson \cite{andersontopology}) that $b_1(X)\le n-1$, so $b_1(X)=n-1$. Moreover, in this case $X$ is flat with the flat $\mb T^{n-1}$ as its soul. Here we have used that $X$ has no boundary to rule out $[0,\infty)\times \mb T^{n-1}$. This in turn implies that $X$ is isometric to $\R\times\mb T^{n-1}$ or to $\mb M^2\times \mb T^{n-2}$, where $\mb M^2$ is the open Mobius strip, see \cite{Ye_bettismooth}*{Proposition 4}. Note that when $X=\mb M^2\times \R^{n-2}$, $X$ has the flat $\R\times\mb T^{n-1}$ as its orientable double cover. We have shown in the proof of \eqref{main:item2} of Theorem \ref{thm:limit_n-2} that it suffices to consider the orientable double cover, so the problem reduces to showing that $\R\times\mb T^{n-1}$ cannot appear as a Ricci limit space for the sequence $M_n$. To this end, observe that $\R\times T^{n-1}$ contains an arbitrarily long toric band, the contradiction follows from the virtue of the proof of Theorem \ref{thm:limit_n-2}.  
\end{proof}

\begin{remark}
    
     However, the first Betti number is not lower semicontinuous w.r.t. the pGH convergence of non-compact manifolds. For example take a half cylinder $\mb S^1\times [0,\infty)$ and glue a hemisphere to the boundary $\mb S^1\times \{0\}$, we obtain a non-compact, simply connected $C^1$ manifold of non-negative curvature with uniform lower bound on the volume of the unit balls. Denote it by $M$. However, take $p_n=(\theta, n)$ for some fixed $\theta\in \mb S^1$, $(M,p_n)$ converges to the cylinder $\mb S^1\times \R$ as $n\to \infty$, which has Betti number $1$, for more details we refer to \cite{Wei_universalcover}.
\end{remark}

\vspace{2mm}
Then we prove the local volume gap Theorem \ref{thm:gap}. 

\begin{proof}
We argue by contradiction. 
Suppose that there exists a sequence $\{(M_i,g_i,p_i)\}_{i=1}^\infty$ such that
$$
\mr{vol}_{g_i}(B(p_i,1)) \rightarrow \omega_n.
$$
Then, by \cite{DPG17}*{Theorem 1.6} (for the isometry) and Theorem \ref{lem:top} (for the homeomorphism) there exists an unrelabled subsequence such that
$$
(M_i, g_i, p_i) \xrightarrow{\mr{pGH}} (M_\infty, \dist, p_\infty) 
$$
with the following properties:

\begin{enumerate}
    \item $B^i(p_i, \frac{99}{100}) \rightarrow B^{\R^n}(p_\infty, \frac{99}{100})$, here $B^{\R^n}(p_\infty, \frac{99}{100})$ is a $n$-dimensional Euclidean ball centered at $p_\infty$ with radius $\frac{99}{100}$ and $B^i$ is a geodesic ball in $M_i$;
    \item  There exists a sequence of bi-H\"older map, $\varphi_i: B^{\R^n}(p_\infty, \frac{1}{2}+ 2c) \rightarrow B^i(p_i,1) $ such that it is a homeomorphism onto its image and the image contains $B^{\mb{R}^n}(p_i,\frac12+c)$, and $\dist_g(\varphi_i(x), \varphi_i(y)) \rightarrow \dist(x,y)$ uniformly as $i \rightarrow \infty $ on $B^{\mb{R}^n}(p_\infty, \frac{1}{2})$ and $c$ is a small positive real number.
 \end{enumerate}

Now, we pick a cube $C_\infty \subset B^{\R^n}(p_\infty, \frac{1}{2})$ in the form
$$
C_\infty = \left[-\left(\frac{1}{2\sqrt{n}} + c\right), \frac{1}{2\sqrt{n}}+c\right]^n.
$$
Here $c$ is a small constant real number and $c$ may be different line by line.
Then, we push $C_\infty$ into $B^i(p_i,\frac{1}{2})$ by $\varphi_i$. Hence, we obtain that $C_i = \varphi_i(C)$ is a cube as well since $\varphi_i$ is a homeomorphism. Then for large $i$, we have the distance between any two opposite sides of $C_i$ can have a distance greater than $\frac{1}{\sqrt{n}} $ as $i \rightarrow \infty$; on the other hand, recall Theorem \ref{thm:cubeineq} that if $(D,g)$ is a $n$-cube with $Sc(g)\geq 4\pi^2 n(n-1)$, then the minimum among the distance of all $n$ opposite sides should be less than or equal to $\frac{1}{\sqrt{n}}$. This implies that the minimum among the distance of all $n$ opposite sides of $\varphi_i(C)$ will be less than or equal to $\frac{1}{\sqrt{n}}$. Hence, we reach a contradiction and conclude that there exists a constant $c_n$ such that
\begin{equation}\label{eq:volgap}
    \mr{vol}(B(p,1)) \leq c_n < \omega_n.
\end{equation}
\end{proof}

\begin{remark}
In fact, the pGH convergence $B_i(p_i,r)\to B^{\R^n}(p_\infty,r)$ holds for any $r\in(0,1)$, but for the closed balls $\bar B(p_i,1)\to \bar B^{\R^n}(p_\infty,1)$ cannot hold under the conditions in the above theorem. This can be seen from a unit ball in the cylinder $\mb S^1(1)\times\R$, which is not isometric to the unit ball in $\R^2$. This example is from \cite{DPG17}.
\end{remark}

Next, we sketch the proof of Corollary \ref{cor:gap_boundary}. It is derived by the same argument as in the previous proof so we omit the details. 

\begin{proof}
we use the volume rigidity for half-space in \cite{BNS20}*{Theorem 8.2} to find that if there exists $p_i\in \partial (M_i,g_i)$ such that $\vol_{g_i}(B(p_i,1))\to \frac12 \omega$, then $(M_i,g_i,p_i)\to (M_\infty,\dist, p_\infty)$, and $B(p_\infty,\frac{99}{100})$ is isometric to $B^{\R^n_+}(0,\frac{99}{100})$, a Euclidean (half) ball in upper half-space of $\R^n$ centered at the origin. Then we use the topological structure theorem \cite{BNS20}*{Theorem 1.2 (iii)} to find, for sufficiently large $i$, an GH approximation $\varphi_i: B^{R^n_+}(0,\frac{51}{100}) \to B^i(p_i,\frac{51}{100})$ whose image contains $B^i(p_i,\frac12+c)$ for some $c>0$ small and $\varphi_i$ is also a bi-H\"older homeomorphism onto its image. Finally, we embed a cube of size $\frac{1}{\sqrt{n}}+c$ in $B^{\R^n_+}(0, \frac{51}{100})$ to $B^i(p_i,\frac{51}{100})$ via $\varphi_i$, to deduce a contradiction to the scalar curvature bound. We have established the volume gap \eqref{eq:gap_bdry}. 
\end{proof}

Cai \cite{Cai_Largemanifold}*{Theorem 3} and Shen \cite{Shen_Largemanifold}*{p.11} obtained that once such a volume gap as in \eqref{eq:volgap} exists, it can be improved, but we can not follow up the proof completely. Their result would imply that in the same setting of Theorem \ref{thm:gap}, there exists a positive constant $c\defeq c(n)$ such that for any $q \in M$,
$$
\vol_g(B(q,r)) \leq c r^{n-1}.
$$

Instead, we prove Theorem \ref{thm:splitting}. 

\begin{proof}
We argue by contradiction. First we prove \eqref{eq:infvol}. Assume there exists a sequence of positive numbers $C_i\to \infty$ so that there exists a sequence $r_i$ with
\begin{equation}
    \inf_p \vol_g(B(p,r_i))>C_i r_i^{n-2}.
\end{equation}
In particular, for any $p\in M$, we have 
\begin{equation}\label{eq:volgrowth}
    \vol_g(B(p,r_i))> C_i r_i^{n-2}.
\end{equation}
Since $\vol_g(B(p,r_i))\le \omega_n r_i^n$ by volume comparison, we have that $r_i^2\ge (\omega_n)^{-1}C_i$ as $i\to \infty$, which implies that $r_i\to \infty$. Moreover, without loss of generality, we can assume $r_1>1$, then by volume comparison $\vol_g(B(p,1))\ge r_1^{-n}\vol_g(B(p,r_1))\ge {C_1}r_1^{-2}>0$, that is, $M$ is everywhere non-collapsed.

For any integer $k\in [1,n]$ we say a $\RCD(K,n)$ space $(X,\dist,\meas)$ splits off $\R^k$ at infinity, if there exists a sequence of points $x_i\in X$, $x_i\to \infty$, meaning for any fixed $x_0\in X$, $\dist(x_0,x_i)\to \infty$ as $i\to \infty$, such that $(X,\dist,\meas, x_i)$ pmGH converges and the limit splits off $\R^k$. 

Now we claim that $M$ splits off $\R^{n-1}$ at infinity. More precisely, there exists sequence of points $\{q_i\}_{i\ge 0}$ such that $(M,g,q_i)$ splits off $\R^{n-1}$ in the pGH limit. Then this is a contradiction to item \eqref{main:item1} of Theorem \ref{thm:limit_n-2}.

So it remains to prove the claim. Although the claim holds for $n\ge 3$, the case $n=3$ is extensively studied as pointed out in the introduction, we focus on $n\ge 4$. The proof is a successive splitting procedure as in the proof of \cite{Shen_Largemanifold}*{Theorem 1.3},  Since $(M,g)$ is non-compact, for any sequence $p_i\to \infty$ there is a ray $\sigma_i:[0,\infty)\to M$ such that $\sigma_i(0)=p_i$. Consider points $p^0_i\defeq\sigma(r_i)$. By almost splitting theorem, $(M,g,p^0_i)$ splits off an $\R$ factor, that is 
\begin{equation}
    (M,g,p^0_i)\xrightarrow{\rm{pGH}}  (X^0,\dist_0, p^0_\infty)=\left(\tilde X^0\times \R,\sqrt{\tilde \dist_0^2+\dist_{\R}^2},(\tilde p^0_\infty,0)\right).
\end{equation}
Meanwhile the volume measure $\vol_g$ converges to $\haus^n$ on $(X_0,\dist_0)$ in the following sense,
\begin{equation}\label{eq:volconv}
    \vol_g(B(p^0_i,r))\to \haus^n(B(p^0_\infty, r)), \quad \forall r\ge 0,
\end{equation}
and $\haus^n$ splits as $\haus^n=\haus^{n-1}\times \leb^1$. The volume growth condition \eqref{eq:volgrowth} and the volume convergence \eqref{eq:volconv} imply that 
\begin{equation}\label{eq:volesti}
   \haus^{n-1}(B(\tilde p^0_\infty, r_i))=\frac{\haus^n(B(p^0_\infty, r_i))}{r_i}\ge C_i r_i^{n-3}\to \infty \text{ as $i\to \infty$},
\end{equation}
which shows $(\tilde X_0,\tilde\dist_0)$ is non-compact hence has a ray at every point, we can thus find a sequence of points $ r^0_i\to \infty$ in $\tilde X_0$ so that $(\tilde X_0,\tilde \dist_0, r^0_i)$ splits off an $\R$ at infinity. Furthermore, this convergence can be realized by a sequence of points $p^1_i$ in $M$. More precisely, 
\begin{equation}
    (M,g, p^1_i)\xrightarrow{\rm{pGH}} (X^1,\dist_1, p^1_\infty)= \left(\tilde X^1\times \R^2,\sqrt{\tilde \dist_1^2+\dist_{\R^2}^2},(\tilde p^1_\infty,0)\right).
\end{equation}
Again, the measure splits as $\haus^{n-2}\times \leb^2$. We can apply this argument $(n-2)$-times and perform the volume estimates \eqref{eq:volesti} to find out that there exists a sequence of points $p^{n-3}_i\to \infty$ so that $(M,g, p^{n-3}_i)$ splits off $\R^{n-2}$ in the limit $(X^{n-3},\haus^n, p^{n-3}_\infty)= (\tilde X^{n-3}\times \R^{n-2},\haus^{2}\times \leb^{n-2}, (\tilde p^{n-3}_\infty,0))$ with volume estimates
\begin{equation}
    \haus^2 (B(p^{n-3}, r_i))\ge C_i\to \infty \text{ as $i\to \infty$}.
\end{equation}
So $\tilde X^{n-3}$ is non-compact, we can argue as above to find a sequence in $\tilde X^{n-3}$ so that $\tilde X^{n-3}$ splits off an $\R$ factor at infinity, and finally, take a sequence of points $q_i$ to realize this convergence from $M$, we then have found a sequence that splits $\R^{n-1}$, which is the claim.  

Now we show \eqref{eq:avr0}. Assume there exists $p$, so that
\begin{equation}
   \lim_{r\to\infty} \frac{\vol_g(B(p,r))}{r^n}=\alpha>0,
\end{equation}
then by volume comparison, we reach that $\vol_g(B(x,1))\ge \alpha >0$ for any $x\in M$. The proof above implies that there is a sequence $q_i\to \infty$ so that $(M,q_i)\to (\R^n,0)$, which contradicts \eqref{main:item1} of Theorem \ref{thm:limit_n-2}. This completes the proof.
\end{proof}

Finally, we prove Proposition \ref{minimalvolume} as follows:

\begin{proof} 
Suppose that $M^3$ is a simply connected three-dimensional 
Riemannian manifold and has only one end. Let $\Omega_0$ be a smooth, compact domain in $M$. For any $x \in M\setminus \Omega_0$, we define 
$$f_0(x)=\dist (x, \Omega_0).$$ 
Then, by \cite{Chodosh_Li_Soapbubble}*{Lemma 17}, there exists a two-sided, connected, closed surface $\Sigma_1 \subset M$ such that
\begin{enumerate}
    \item $\Sigma_1 \subset L(0, 4\pi):=\{x \in M: 0 \leq f_0(x) \leq 4\pi\}$;
     \item \label{diameter}$\text{diam}(\Sigma_1) \leq \sqrt{3}\pi.$
\end{enumerate}
Now, we pick $\Omega_0 = \{p\}$ for some $p \in M$. By \cite{AG}*{Proposition 4.1}, then there exists a constant $R= R(\Lambda)$ and a bounded domain $\Omega_u$ containing $p$ such that  for any $s> t \geq  R$, the unbounded connected set $\{ q \in M \setminus \Omega_u\} \bigcap \{q \in M , q \in  \dist^{-1}([t,s])\}$ is homeomorphic to $\Sigma_u \times [0,1]$ (See \cite{Cheeger_Criticalpoint}).

Let us first show that there exists a point $p \in M$ and constant $c_\Lambda\defeq c(\Lambda)$ such that
\[\vol_g(B(p,r)) \leq cr,\quad \forall r \geq 0.\]
Having assumed that $M$ has one end only, we obtain that for any $q \in M$, there exists a constant $R_1$ such that $\partial(B(q,R_1))$ is connected and divides $M$ into two components and $M\setminus B(q,R_1)$ is a connected, unbounded domain. Then, we consider $f(x)=\dist(x, \partial B(q,R_1))$ and then define
\[M_f= \max\{f(x): x \in B(q,R_1)\}.\]

Now, we take $p_0 \in B(q,R_1)$ such that $f(p_0) = M_f$. i.e. $p_0$ is the ``innermost'' point in $M$ geometrically. 

\textbf{Claim}: For $p_0$, there exists a constant $c>0$, such that
\begin{equation} \label{volumeatp0}
   \vol_g(B(p_0,r)) \leq c r,\quad  \forall r \geq 0. 
\end{equation}

Foremost, let's do the following preparation:  there exists a constant $R_\Lambda\defeq R(\Lambda)$ such that $\dist(p_0,x)$ has no any critical points in the unbounded connected of $M\setminus B(p_0,R_\Lambda)$ and we denote by $\Omega_u$ the unbounded connected of $M\setminus B(p_0,R_\Lambda)$ as follows. We claim that $\partial \Omega_u$ is connected. Suppose not, for any $r >R_\Lambda$, we have
\[ L(R_\Lambda, r) = \{x \in \Omega_u: R_\Lambda \leq f(x) \leq r\}. \]
has more than one connected components. Since $f(x) = \dist(p_0, x)$ has no critical points on $M\setminus B(p_0,R_\Lambda)$, it implies that that $M$ has more than one end, which contradicts with the assumption that $M$ has one end. Therefore, $ \partial \Omega_u$ is connected.

Moreover, for any $p_1 \in M\setminus \Omega_u$, we claim
\begin{equation}\label{distance}
    \dist(p_0,p_1) \leq 2R_\Lambda +30\pi.
\end{equation}
Let us prove the inequality (\ref{distance}). Let $\gamma_i$ be a minimizing geodesic segment such that for $i=0,1$,
\begin{itemize}
    \item $\gamma_i(0)=p_i, \gamma_i (d(p_i, \partial B(p,R))) \in \partial (B(p,R))$;
    \item $\gamma_i$ intersects with $\partial \Omega_u$ at $P_i$.
\end{itemize}
Then, by triangle inequality, we obtain
$$
\begin{array}{rcl}
   \dist(P_1, \partial B(p,R))& \geq&  (\dist(p, \partial B(p,R))-R_\Lambda)-20\pi.
\end{array}
$$
Since $\dist(p_0, \partial B(p,R)) \geq d(p_1, \partial B(p,R))$ by the choice of $p_0$ above, by triangle inequality again, we arrive at
$$
\dist(p_0,p_1) \leq \dist(p_0,P_0) + \dist (P_0,P_1) + \dist(P_1,p_1) +20\pi  \leq 2R_\Lambda +30\pi.
$$
Hence, we finish the proof of the inequality (\ref{distance}), and then set up $\bar{R}_\Lambda=2R_\Lambda +30\pi$. Hence, $B(p_0, \bar{R}_\Lambda)$
satisfies the properties as follows:
\begin{itemize}
    \item $\partial B(p_0, \bar{R}_\Lambda)$ is a two-dimensional topological sphere;
    \item $M\setminus B(p_0,\bar{R}_\Lambda)$ is connected, unbounded and does not contain any critical points of $\dist(p_0,x)$ in $ M\setminus B(p_0,\bar{R}_\Lambda)$.
\end{itemize}

\bigskip

Secondly, we go back the proof the volume growth estimate (\ref{volumeatp0}). We divide the proof into two cases as follows:

\begin{itemize}

\item \textbf{Case 1:} If $r \geq \bar{R}_\Lambda + 4\pi$ and $(n-1) < \frac{r-\bar{R}_\Lambda}{4\pi} \leq n$ for some positive $n \in \mathbb{Z}$, then 
\[
\vol_g(B(p_0, r)\setminus B(p_0,\bar{R}_\Lambda)) \leq \sum_{i=1}^n \vol_g (L(\bar{R}_\Lambda+4\pi (i-1)), \bar{R}_\Lambda+4\pi i)).
\]
Note that the volume comparison implies that
$$
\vol_g(B(p_0, r)\setminus B(p_0,\bar{R}_\Lambda)) \leq n \omega_3(12\pi)^3 \leq 2\omega_3(n-1)(12\pi)^3 \leq \frac{2\omega_3(12\pi)^3}{4\pi}(r-\bar{R}_\Lambda).
$$
Hence, for all $r  \geq \bar{R}_\Lambda + 4\pi $, we have
\begin{equation}\label{bigr}
    \vol_g(B(p_0, r)\setminus B(p_0,\bar{R}_\Lambda)) \leq \frac{12^3 \omega_3 \pi^2 }{2}(r-\bar{R}_\Lambda);
\end{equation}

\item \textbf{Case 2:} If $r \leq \bar{R}_\Lambda + 4\pi$, then we have by volume comparison theorem, 
\begin{equation}\label{smallr}
    \vol_g(B(p_0,r)) \leq (\omega_3r^2) r,
\end{equation}
and $\omega_3 r^2 \leq \omega_3(\bar{R}_\Lambda + 4\pi)^2$.

\end{itemize}

Hence, together with (\ref{bigr}) and (\ref{smallr}), there exists a constant $c_\Lambda$ such that
$$\vol_g(B(p_0,r)) \leq c_\Lambda r, \ \ r \geq 0.$$
Here, $c_\Lambda= \max\{2\omega_3(\bar{R}_\Lambda + 4\pi)^2, \frac{12^3 \omega_3 \pi^2 }{2}\}$.

\bigskip

Moreover, let us prove that there exists a constant $c=c_\Lambda$ such that for any $P \in M$ such that
\begin{equation}
{\vol_g(B(P,r))} \leq c_\Lambda r.
\end{equation}
Let us divide the proof into two cases: 

$\textbf{(A):}$ $\dist(P,p_0) \leq 2\bar{R}_\Lambda$:
\begin{itemize}
    \item If $r \leq 4\bar{R}_\Lambda$, by volume comparison, we have
    $$\vol_g(B(P,r)) \leq (4\bar{R}_\Lambda)^2 \omega_3 r.$$
    \item If $r \geq 4\bar{R}_\Lambda$, we have by the same argument above
    $$\vol_g(B(P,r)) \leq \vol_g(B(p_0,\bar{R}_\Lambda))+ \vol_g(B(P,r)\setminus B(p_0,\bar{R}_\Lambda)).$$
    Since $B(P,r)\setminus B(p_0,\bar{R}_\Lambda) \subset M\setminus B(p_0,\bar{R}_\Lambda)$, Hence
    \begin{equation}\label{outpart}
        \vol_g(B(P,r)\setminus B(p_0,\bar{R}_\Lambda)) \leq c_\Lambda r.
    \end{equation}
    Together with two cases in \textbf{(A)}, we reach that there exists a constant $c_\Lambda$ such that for any $r \geq 0$,
    $$\vol_g(B(P,r)) \leq c_\Lambda r.$$

\end{itemize}

$\textbf{(B):}$ $\dist(P,p_0) \geq 2\bar{R}_\Lambda$:
\begin{itemize}
    \item  If $r \leq 2\bar{R}_\Lambda$, by volume comparison theorem, 
    $$\vol_g(B(P,r)) \leq c_\Lambda r;$$
    \item if $r \geq 2\bar{R}_\Lambda$, we have
    $$\vol_g(B(P,r)) \leq \vol_g(B(p_0,\bar{R}_\Lambda)) + \vol_g(B(P,r)\setminus B(p_0, \bar{R}_\Lambda)) $$
    By the same argument in (\ref{outpart}), we have
    $$\vol_g(B(P,r)) \leq c_\Lambda r.$$
\end{itemize}
Hence, combining $(A)$ and $(B)$,  we obtain that there exists a constant $c_\Lambda$ such that, for any $p$ and $r$,
$$\vol_g(B(p,r)) \leq c_\Lambda r.$$

\bigskip
Finally, let us prove the volume asymptotic behavior:
\begin{equation}
   \limsup_{r \rightarrow \infty } \frac{\vol_g(B(p,r))}{r} \leq  \limsup_{r \rightarrow \infty } \frac{\vol_g(p_0, \bar{R}_\Lambda) + \vol_g(B(p, r)\setminus B(p_0,\bar{R}_\Lambda))}{r} < c, \ \text{for some} \ c>0.
\end{equation}

Here $c$ is a universal constant since $ \vol_g(B(p, r)\setminus B(p_0,\bar{R}_\Lambda)) \subset M\setminus B(p_0,\bar{R}_\Lambda)$ and then
$\vol_g(B(p, r)\setminus B(p_0,\bar{R}_\Lambda)) \leq c r$. We have completed the proof.
\end{proof}


\textbf{Conflict of interests declaration.} The authors declare that there is no conflict of interests.

\textbf{Data availability Statement.} There are no data associated to this article.

\bibliographystyle{amsalpha}
\bibliography{scalarcurvature}

\end{document}